\numberwithin{equation}{section}
\newtheorem{theorem}{Theorem}[section]
\newtheorem{lemma}[theorem]{Lemma}
\newtheorem{proposition}[theorem]{Proposition}
\newtheorem{assumption}[theorem]{Assumption}
\theoremstyle{definition}
\newtheorem{example}[theorem]{Example}\newtheorem{definition}[theorem]{Definition}
\newtheorem{remark}[theorem]{Remark}
\def\E{{\mathbb E}}
\def\R{{\mathbb R}}
\def\N{{\mathbb N}}
\def\P{{\mathcal P}}
\def\X{{\mathcal X}}
\def\L{{\mathcal L}}
\def\F{{\mathcal F}}
\newcommand{\bx}{\mathbf{x}}
\newcommand{\by}{\mathbf{y}}
\DeclareMathOperator*{\argmin}{arg\,min}
\newcommand\cC{\mathcal C}
\newcommand\cF{\mathcal F}
\newcommand\cL{\mathcal L}
\newcommand\cP{\mathcal P}
\newcommand\cW{\mathcal W}
\newcommand\cX{\mathcal X}
\def \E{\mathbb{E}}
\def \F{\mathbb{F}}
\def \L{\mathbb{L}}
\def \N{\mathbb{N}}
\def \P{\mathbb{P}}
\def \R{\mathbb{R}}
\def \X{\mathbb{X}}
\def \e{\mathrm{e}}
  \thanks{The authors wish to thank Vincent Hoffmann and Kaiwen Zhang for helpful comments. The first author is partially supported by the NSF under Grant CAREER DMS-2143861. The second author is partially supported by NSF grant \#DMS-2508581. }
\title[Particle systems and large static games]
{Particle system approximation of Nash equilibria in large games}
\author{Ludovic Tangpi and Nizar Touzi} 
\date{\today}
  \address{Princeton University}
  \email{ludovic.tangpi@princeton.edu}
  \address{New York University}
  \email{nizar.touzi@nyu.edu}
\begin{document}

\begin{abstract}
We develop a probabilistic framework to approximate Nash equilibria in symmetric $N$-player games in the large population regime, via the analysis of associated mean field games (MFGs). The approximation is achieved through the analysis of a McKean–Vlasov type Langevin dynamics and their associated particle systems, with convergence to the MFG solution established in the limit of vanishing temperature parameter. Relying on displacement monotonicity or Lasry–Lions monotonicity of the cost function, we prove contractivity of the McKean–Vlasov process and uniform-in-time propagation of chaos for the particle system. Our results contribute to the general theory of interacting diffusions by showing that monotonicity can ensure convergence without requiring small interaction assumptions or functional inequalities.
\end{abstract}

\maketitle



\section{Introduction}

A classical optimization problem is the analysis of non-cooperative games among $N$ agents with cost functions $F_i:(\R^d)^N\to\R$, $i=1,\dots, N$ for two given integers $d,N$.
For such games, it is standard to search for \emph{Nash equilibria}.
That is, strategies $\bx = (x^1,\dots, x^N)\in (\R^d)^N$ such that
\begin{equation*}
  F_i(\bx) \le F_i(y,\bx^{-i})\quad \text{for all } i\ge 1\text{ and } y\in \R^d,
\end{equation*}
where we put $(\bx^{-i},y) := (x^1,\dots,x^{i-1},y,x^{i+1},\dots,x^N)$.
A famous result by \citet{nash1951noncooperative,Nash50} gives general conditions guaranteeing existence of (relaxed, or mixed) Nash equilibria.
This concept of equilibrium is widely used in several engineering and social sciences applications.
Unfortunately, existence proofs of Nash equilibria are non-constructive. The efficient numerical approximation of Nash equilibria is a challenging problem extensively addressed in the existing literature, see e.g. \citet{NinaBalcan17}, \citet{Borgens21}, \citet{L.Combettes15}, \citet{DGP06}, \citet{Dask09}, \citet{Hansen74}, \citet{Pot-Nud-Sho08}, \citet{Mazumdar-Rat-Sas20}, \citet{Yi-Pavel19} and \citet{Flam93}.
This literature shows in particular that Nash equilibria cannot be computed in polynomial times in general \cite{Dask26,DGP06,Dask09}.

A standard approach to the approximation problem is the classical gradient descent algorithm
\begin{equation*}
  x^i_{t_{k+1}} = x^i_{t_{k}} - D_iF_i(x^1_{t_k},\dots, x^N_{t_k})\delta,
  ~~i=1,\ldots,N
\end{equation*}
for some $\delta>0$ and with given initialization $\bx_{t_0} \in (\R^d)^N$.
The convergence of this algorithm is studied for instance in \citet{Cher-Gha-Cor17}, \citet{Heusel-Ram-Unte17}, \citet{Nagaranjan17} and \citet{Jin-Net-Jor20} in the case of two-player zero-sum game and training of generative adversarial networks and \citet{Mazumdar-Rat-Sas20} in the case of multi-agent games.
In particular, these authors show that unless strong convexity conditions are imposed, the above gradient based algorithm may not converge, or could converge to a non-Nash equilibrium point even in the two-player zero sum case.
When convergence to an equilibrium occurs, the speed of convergence deteriorates when the population size $N$ becomes large.

In order to speed up the convergence of the method and overcome possible lack of convexity of the cost function, a natural idea is to penalize the above scheme by $ \sqrt{2\sigma}\varepsilon^i$ for some Gaussian random variables $\varepsilon^i$ and temperature parameter $\sigma>0$.
We refer for instance to \citet{Chiang-Hwang-Sheu87,Geman-Hwang86} for early works developing the idea. 
The high level idea is that adding the noise $\sqrt{2\sigma}\varepsilon$ allows to gain some convexity of sorts; see e.g. \citet{Huang-Malik25}.
Despite the benefits of the noise in speeding-up the computation and avoiding non-Nash equilibria, the scheme remains slow for large $N$.
Inspired by the theory of mean field games of \citet{lasry2006jeux,lasry2006jeux2} and \citet{huang2006large}, we will focus on symmetric games when the cost functions $F_i$ are given by
\begin{equation*}
  F_i(x^1,\dots,x^N) = F(x_i, \mu^N_{\bx}),~\mbox{with}~\mu^N_\bx := \frac1N\sum_{i=1}^N\delta_{x^i}
  ~\mbox{for all}~\bx = (x^1,\dots,x^N).
\end{equation*}
Here the function $F:\R^d\times \cP(\R^d)\to \R^d$ where $\cP(\R^d)$ is the set of probability measures on $\R^d$.
The associated \emph{(static) mean field game} equilibrium is a probability measure $m\in \cP(\R^d)$ such that 
\begin{equation*}
  \mathrm{supp}(m) = \argmin_{x\in \R^d} F(x,m),
\end{equation*}
where $\mathrm{supp}(m)$ denotes the support of $m$.
The main goal of the present paper is to propose an approximation method for the mean field equilibrium.
This will therefore allow to approximate $N$-player Nash equilibria for $N$ large enough.
Incidentally, our analysis will also allow to show quantitative convergence of Nash equilibria to mean field equilibrium as known for dynamic stochastic differential games, see e.g. \citet{cardaliaguet2017convergence}, \citet{laurieretangpi}, \citet{jackson2023quantitative}, and \citet{gangbo2022mean}.

To achieve these goals we will rely on the interacting particle system
\begin{equation}
  \label{eq:N.SymLangevin}
  dX^i_t = - \nabla_{\!x}F(X^i_t,\mu^N_{\X_t}) dt + \sqrt{2\sigma} dW^i_t,  \mbox{ with } \X = (X^1,\dots, X^N),
\end{equation}
whose Euler-Maruyama discretization is exactly the scheme introduced above.
Its natural $N\to\infty$ limit is the McKean-Vlasov Langevin dynamics
\begin{equation}
\label{eq:McVLangevin}
  dX_t = - \nabla_xF(X_t,m_{X_t}) dt + \sqrt{2\sigma} dW_t \quad\text{with } m_{X_t} := \P\circ X^{-1}_t,
\end{equation} 
for some Brownian motion $W$.

\subsection{Description of the main results}
Similar to Markov chain Monte Carlo methods popular in optimization, the main results of this work will allow to estimate the mean field equilibrium via solutions of \eqref{eq:McVLangevin} (resp. \eqref{eq:N.SymLangevin}) for $t$ large enough and $\sigma$ small enough (resp. for $t,N$ large enough and $\sigma$ small enough).
The main structural properties we will consider on the cost function $F$ are \emph{monotonicity} properties.
We will assume that $F$ is either \emph{Lasry-Lions monotone}, or \emph{displacement monotone}, see Definition \ref{ass.convex}, two alternative conditions whose strict versions guarantee uniqueness of the mean field equilibrium, but also of the invariant measure of \eqref{eq:McVLangevin}. 
The main results of the paper can therefore be summarized as follows:
\begin{enumerate}
    \item[(i)]
    In Theorem \ref{thm:Laplace.principle}, we construct mean field equilibrium from the particle system \eqref{eq:McVLangevin} as $\sigma\to0$ limits of invariant measures.
    \item[(ii)] 
    If $F$ is $\ell_F$-displacement monotone, we show that if $\ell_F >0$, then \eqref{eq:McVLangevin} satisfies contractivity and \eqref{eq:N.SymLangevin} satisfies uniform in time propagation of chaos.
    \item[(iv)]
    Under the above assumptions, we derive convergence of the laws of $X^{i,N}_t$ and $X_t$ to the mean field equilibrium, as well as quantitative convergence (along with concentration inequalities) of the empirical measure $\mu_\bx^N$ to the mean field equilibrium where $\bx$ is the $N$-player Nash equilibrium.
\end{enumerate}

\subsection{Related Literature}
The derivation of explicit convergence rates to the invariant measures of SDEs is a basic and well-studied question in applied probability and PDE.
Due to modern machine leaning applications such as the training of Generative Adversarial Networks and performance of large neural network models, the case of McKean-Vlasov equations has received sustained interest in the recent years, see \citet{Hu-Ren-Sis-Sz21}, \citet{Con-Kaz-Ren22}, \citet{Huang-Zhang23}, \citet{Dus-Ehrl25}, \citet{Maass-Fontbona24} and \citet{Shevchenko22}.
Uniform-in-time propagation of chaos as well as contractivity of McKean-Vlasov equations are studied by many authors.
Most works focus on equations with drift of the form $ - \int \nabla V(x- y)\mu(dy) -\nabla U(x)$ for some external/confinement potential $U$ and an interaction kernel $V$ and they typically fall under one of the following three categories:
Earlier papers assumed strong convexity conditions on the parameters: \citet{Car-McC-Vil03,Malrieu01}, \citet{Monmarche17}.
The convexity assumptions have been substantially weaken to allow weak convexity, but assuming small enough interaction: \citet{Eberle-Guillin20}, \citet{Eberle-Guillin-Zimmer19}, \citet{Lacker-LeFlem03} and  \citet{Schuh24}. The third strand of works rather relies on log-Sobolev inequalies for the invariant measure (or uniform in $N$ log-Sobolev equality for the particle system): \citet{Chizat22}, \citet{Gui-Mon21}, \citet{Chen-Lin-Ren-Wang24}. We notice that, except for \cite{Chen-Lin-Ren-Wang24,Hu-Ren-Sis-Sz21}, most of the existing literature focuses on problems with linear interaction. 
More importantly, essentially all these papers make a crucial smallness assumption on the Lipschitz constant of the interaction term.
In the current work, we observe that monotonicity properties in the measure argument (along with convexity of the function $F$) allow to obtain exponential contractivity of the McKean-Vlasov equation \eqref{eq:McVLangevin} without limitation on the Lipschitz-constant of $F$ or a-priori assuming functional inequalities.

While diffusion based methods for optimization and two-player zero-sum games is an extensively studied topic, we do not know of works on non-zero-sum games.
The closest papers to ours are \citet{Awi-Hynd-Mawi23,RHynd23}, on general $N$-player games and a section on mean field games in \citet{RHynd23}.
Here the authors use PDE techniques to approximate relaxed equilibrium as Ces\`aro limits of dynamical systems.
In these paper, convergence rates are not derived, but similar to the present work, monotonicity properties of the cost functions play a key role for the success of the arguments.

When the function $F$ is derived from a potential $\cF:\cP(\R^d)\to \R$, the game is said to be \emph{potential}, and our method allows to approximate the minimizer of $\cF$ via mean field Langevin dynamics.
Contractivity and propagation of chaos have been obtained notably 
by \citet{Hu-Ren-Sis-Sz21},  \citet{Eberle-Guillin20,Eberle-Guillin-Zimmer19}, mostly focusing on the case
\begin{equation*}
  \cF(m) = \int_{\R^d}V(x)m(dx).
\end{equation*}
In this setting, recent works have made progress allowing to relax the classical smallness assumption on the Lipschitz constant of the interaction by exploiting monotonicity conditions, in the spirit of those used here.
In fact, assuming convexity of $\mathcal{F}$ (and additional a priori functional inequalities) one can show that the Gibbs measure $m^N(d\bx) = \exp(-N\cF(\mu^N_{\bx}))d\bx/Z$ (for some constant $Z$) satisfies a uniform in $N$ log-Sobolev inequality, see \citet{SongboWang24}.
It is well known that such a property allows to derive a uniform in $N$ contractivity result for the $N$-particle system \citet[Section 5.2]{Bak-Gen-Led2014}.
We also refer to \citet{Kook.etal24,Monmarche24} for further development along these lines.

\subsection{Outline of the rest of the paper}

In the next section we present the probabilistic setting and rigorously define the $N$-player game and the associated mean field game.
In the next section we present the main results of the paper, and Sections \ref{sec:mean_field_games_and_ergodicity_of_langevin_dynamics}, \ref{sec:particle_system_contractivity_and_propagation_of_chaos} and \ref{sec:approximation_of_equilibria} is dedicated to the proofs of the three main results discussed above.

\section{The general framework}

\subsection{Notations}

Throughout, we equip $\R^d$ with the Euclidean norm $|\cdot|$, for all $d\in \N^*$. We denote by $\cP_p(\R^d)$ the collection of all probability measures $m$ on $\R^d$ with finite $p-$th moment $|m|_p^p:=\int|x|^pm(dx)<\infty$. Unless otherwise specified, $\cP_p(\R^d)$ is equipped with the $p-$Wasserstein distance:
$$
\cW_p^p(m,m')
:=
\inf_{\pi\in\Pi(m,m')} \int |x - x'|^p\pi(dx,dx'),
~~
m,m'\in\cP_p(\R^d),
$$
with $\Pi(m,m')$ the collection of all coupling probability measures of $m$ amd $m'$, i.e. all probability measures on $\R^d\times\R^d$ with marginals $m$ and $m'$. 

For a vector $\bx:=(x^1,\dots,x^N)\in (\R^d)^{N}$, we denote by
$$
\mu^N_{\bx}:=\frac1N\sum_{i=1}^N \delta_{x^i} \in\cP_p(\R^d)
$$
the corresponding empirical measure, and
$$
\bx^{-i}:=(x^1,\dots,x^{i-1},x^{i+1},\dots,x^N)\in (\R^d)^{N-1}
,~
(\bx^{-i},y):= (x^1,\dots,x^{i-1},y,x^{i+1},\dots,x^N)\in (\R^d)^{N}.
$$

A map $f:\cP_p(\R^d)\longrightarrow\R$ is said to have a linear functional derivative if there exists a continuous function $\delta_mf:\cP_p(\R^d) \times \R^d \longrightarrow\R$, with $p-$polynomial growth in $x$, locally uniformly in $m$, such that for all $m_0,m_1\in\cP_p(\R^d)$ with barycenter $m_\lambda:=(1-\lambda)m_0+\lambda m_1$, we have
$$
\frac{f\big(m_\lambda\big)-f(m_0)}{\lambda}
\longrightarrow
\int_{\R^d} \delta_mf(m_0,x)(m_1-m_0)(dx),
~\mbox{as}~\lambda\searrow 0.
$$
Clearly, $\delta_mf$ is unique up to an additive constant. 
Whenever $\delta_mf(m,.)$ is differentiable, its gradient
\begin{equation*}
D_mf(m,x): = \nabla_{\!x}\delta_mf(m,x)
~~\mbox{for all}~~
m\in\cP_p(\R^d),~x\in\R^d,
\end{equation*}
essentially coincides with the Wasserstein gradient and the Lions' derivative (under additional technical conditions).
Finally, a map $V:\R^d\times \cP_2(\R^d)\to \R$ is said to be continuously differentiable (i.e.~is $C^1$) if for every $m\in \cP_2(\R^d)$, the function $V(\cdot,m)$ is continuously differentiable with jointly continuous partial gradient $\nabla_{\!x}V$, and for every $x\in \R^d$, the function $V(x,\cdot)$ admits a Lions derivative. We refer the reader to \citeauthor{carmona2018probabilisticI} \cite[Chapter 5]{carmona2018probabilisticI} or \citeauthor{cardaliaguet2019master} \cite[Chapter 1]{cardaliaguet2019master} for details.

Throughout this paper, we shall consider a family $(W^1,\dots,W^N)$ of independent Brownian motions with values in $\R^d$, on a filtered probability space $(\Omega^N, \cF^N, \P^N, \F^N)$.
For simplicity, we will assume that $\Omega^N:= (\R^d)^N\times \cC^N$ where $\cC=C([0,T],\times \R^d)$ is the set of continuous functions on $[0,T]$ with values in $\R^d$, $\P^N:=(m_0)^{\otimes N}\times (\P^0)^{\otimes N}$ where $\P^0$ is the Wiener measure on $\cC$ and $m_0$ is some fixed probability measure on $\R^d$ with finite second moment and $\cF^N$ is the Borel-sigma algebra on $\Omega^N$.
We will always put
\begin{equation*}
    (\Omega, \cF, \P) := (\Omega^1,\cF^1,\P^1),\quad W := W^1.
\end{equation*}

\subsection{Finite population game}

The game is defined by $N$ loss functions $F_i:(\R^d)^N\longrightarrow \R$. In this paper, we are concerned with the following notions of equilibria. 

\begin{definition}
{\it
\noindent {\rm (i)} For $\varepsilon\ge 0$, a strategy $\bx\in (\R^d)^N$ is an $\varepsilon-$approximate Nash equilibrium if
\begin{equation*}
    F_i(\bx) \le F_i( \bx^{-i},y) + \varepsilon,
    ~\text{for all}~i=1,\dots,N,~\mbox{and}~ y\in \R^d.
\end{equation*}
{\rm (ii)} A Nash equilibrium is a $0-$approximate Nash equilibrium.}
\end{definition}

Following the traditional gradient descent technique in optimization theory, one may approximate Nash equilibria by means of the solution of the ordinary differential equation defined as the continuous time limit of the agent-wise gradient descent algorithm:
\begin{equation}
\label{eq:sigma=0.equation}
    du^{i,N}_t = - \nabla_{\!x^i}F_i(u^{1,N}_t,\dots, u^{N,N}_t) dt, \quad u^{i,N}_0\in \R^d,~~i=1,\ldots,N.
\end{equation}
To guarantee that the ODE \eqref{eq:sigma=0.equation} has a unique solution $\mathbf{u}=(u^{1,N},\ldots,u^{N,N})$ for all initial condition $\mathbf{u}_0\in (\R^d)^N$, we assume that 
\begin{equation}\label{nablaFLip}
F_i
~\mbox{is continuously differentiable in $x^i$ with Lipschitz-continuous}
~\nabla_{\!x_i}F_i,
~i=1,\ldots,N.
\end{equation}

\begin{definition}\label{def:nonsym}
The family of cost functions $\{F_i\}_{1\le i\le N}$ is strictly displacement monotone if 
  \begin{equation}
  \label{eq:GenDM}
    \sum_{i=1}^N(x^i - y^i)\!\cdot\!\big[\nabla_{\!x^i}F_i(\bx)-\nabla_{\!x^i}F_i(\by)\big]>0,
    ~\mbox{for all}~\bx,\by \in (\R^d)^N~\mbox{with}~\bx \neq \by
\end{equation}
and $\ell_F$-displacement monotone if 
\begin{equation*}
  \sum_{i=1}^N(x^i - y^i)\!\cdot\!\big[\nabla_{\!x^i}F_i(\bx)-\nabla_{\!x^i}F_i(\by)\big]\ge \ell_F\sum_{i=1}^N|x^i - y^i|^2,
\end{equation*}
\end{definition}
We now recall a key result of \citeauthor{Awi-Hynd-Mawi23} \cite{Awi-Hynd-Mawi23} that will be used to show convergence of Nash equilibria to mean field equilibria. First, under the displacement monotonicity condition defined above, if the function
\begin{equation}
\label{eq:convex.group}
  \by \mapsto \sum_{j=1}^NF_j(y^j,x^{-j})\mbox{ is convex for all } \bx,
\end{equation}
and additional regularity properties, the differential equation \eqref{eq:sigma=0.equation} admits a unique Lipschitz continuous solution $\boldsymbol u = (u^{1,N}, \dots, u^{N,N}):[0,\infty)\longrightarrow (\R^d)^N$. More importantly they show that, if the $N$-player game admits a Nash equilibrium, then the Ces\'aro average converges to a Nash equilibrium, i.e.
\begin{equation*}
  \bx = \lim_{t\to \infty}\frac1t\int_0^t\boldsymbol u_sds \mbox{ is a Nash equilibrium. }
\end{equation*}
When the convexity property fails, the convergence statement in the previous result does not hold in general. It is well-known that in this case, the gradient descent algorithm may be stuck in local minima of the map $F_i$. 
In order to allow the gradient descent to escape from local minima traps, we follow the standard literature on numerical optimization by introducing a stochastic perturbation to the particle system \eqref{eq:sigma=0.equation}.
This leads naturally to the following Langevin-type stochastic particle system $\X^{\sigma,N}=(X^{i,N})_{1\le i\le N}$:
\begin{eqnarray}\label{eq:Part.Syst.Gen}
dX^{i,N}_t 
&=&  - \nabla_{\!x^i}F_i( \X^{\sigma,N}_t) dt 
                       +\sqrt{2\sigma}\; dW^i_t, \quad i = 1,\dots,N,
\end{eqnarray}
with parameter $\sigma\in(0,1]$.
Throughout this paper, we shall assume without further  mention that
\begin{equation}\label{X0L2}
\begin{array}{c}
\mbox{\it all processes defined through the particles system or the corresponding mean field limit}
\\
\mbox{\it are started from square integrable initial conditions.}
\end{array}
\end{equation} 
Under \ref{nablaFLip} and \ref{X0L2}, the SDE \eqref{eq:Part.Syst.Gen} has a unique square integrable strong solution $\X^{\sigma,N} = (X^{i,N})_{1\le i\le N}$ for all $\sigma>0$.

For later use, we start by showing that the solution of the interacting Langevin SDE \eqref{eq:Part.Syst.Gen} has a similar asymptotic behavior as \eqref{eq:sigma=0.equation}.

\begin{proposition} \label{Prop:Conv.N-Nash.to.avera}
  Assume that \eqref{nablaFLip} and \eqref{eq:convex.group} hold and that the family $\{F_i\}_{i=1,\dots,N}$ is $\ell_F$-displacement monotone.
  Let $\bx^*$ be a Nash equilibrium. 
  If $X^{i,N}_0 = u^{i,N}_0$, then putting
  $\overline{\X}^{\sigma,N}_t:= \frac1t\int_0^t\X^{\sigma,N}_sds$, we have:
  \begin{eqnarray*}
    \lim_{\sigma\to 0}\sup_{t\ge 0}\E\big[ \cW_2^2(\mu^N_{ \boldsymbol{u}_t},\mu^N_{{\X}^{\sigma,N}_t}) \big] = 0 \mbox{ and }
    \lim_{t\to\infty}\lim_{\sigma\to0}
    \E\big[\cW_2^2(\mu^N_{\bx^*}, \mu^N_{\overline{\X}_t^{\sigma,N}})] = 0,
  \end{eqnarray*} 
  where the two limits in the second claim commute.
\end{proposition}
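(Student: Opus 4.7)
My strategy is to reduce both claims to controlling the single quantity $\Phi(t):=\sum_{i=1}^N\E|X^{i,N}_t-u^{i,N}_t|^2$, since the diagonal coupling immediately gives $\E\cW_2^2(\mu^N_{\boldsymbol{u}_t},\mu^N_{\X^{\sigma,N}_t})\le \tfrac{1}{N}\Phi(t)$. The first claim will come from a differential inequality for $\Phi$ derived via It\^o's formula combined with displacement monotonicity of $F$ and $\ell_U$-convexity of $U$, and the second from the first by Jensen's inequality plus the Ces\`aro convergence $\tfrac{1}{t}\int_0^t\boldsymbol{u}_s\,ds\to\bx^*$ supplied by Awi--Hynd--Mawi \cite{Awi-Hynd-Mawi23}.

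\textbf{A priori bounds and the key inequality.} Since $\bx^*$ is a Nash equilibrium, $\nabla_{x^i}F_i(\bx^*)=0$, so strict displacement monotonicity forces $\tfrac{d}{dt}\sum_i|u^{i,N}_t-x^{*,i}|^2\le 0$, giving $\sup_{t\ge 0}\|\boldsymbol{u}_t\|<\infty$ and, via \eqref{nablaULip}, a uniform bound on $|\nabla U(u^{i,N}_t)|$. A parallel It\^o argument for $\sum_i\E|X^{i,N}_t-x^{*,i}|^2$, using $\ell_U$-convexity of $U$ and Young's inequality to absorb the cross terms, produces a uniform-in-$(t,\sigma)$ $L^2$-bound on $\X^{\sigma,N}$. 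Applying It\^o's formula to $\Phi$ and taking expectation yields
\[
\frac{d\Phi}{dt}=-2\E\sum_i(X^{i,N}_t-u^{i,N}_t)\cdot\bigl[\nabla_{x^i}F_i(\X^{\sigma,N}_t)-\nabla_{x^i}F_i(\boldsymbol{u}_t)\bigr]-2\sigma\E\sum_i(X^{i,N}_t-u^{i,N}_t)\cdot\nabla U(X^{i,N}_t)+2\sigma Nd.
\]
Displacement monotonicity makes the first term non-positive. For the second, the decomposition $\nabla U(X^{i,N})=\nabla U(u^{i,N})+[\nabla U(X^{i,N})-\nabla U(u^{i,N})]$ combined with $\ell_U$-convexity and Young's inequality produces a $-\sigma\ell_U\Phi$ contraction and a forcing of order $\sigma$, yielding $\Phi'+\sigma\ell_U\Phi\le \sigma C$. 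The main obstacle is that the raw Gr\"onwall bound from this inequality is only $O(1)$ in $\sigma$ at stationarity and so cannot by itself deliver $\sup_t\Phi\to 0$. I would overcome this by splitting the time axis: on $[0,T]$, a standard Gr\"onwall estimate exploiting Lipschitzness of $\nabla F$, $\nabla U$ and the $\sqrt{\sigma}$-scaling of the noise gives $\sup_{t\le T}\Phi(t)\le C_T\sigma$; on $[T,\infty)$, one uses that strict displacement monotonicity on the compact set in which both trajectories are confined is quantitative by continuity of $\nabla F$, producing an honest contraction rate that dominates the $O(\sigma)$ forcing and hence $\lim_{\sigma\to 0}\sup_{t\ge 0}\Phi(t)=0$.

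\textbf{Ces\`aro claim and commutativity.} For the second claim, the identity $\bar{X}^{i,\sigma,N}_t-\bar{u}^{i,N}_t=\tfrac{1}{t}\int_0^t(X^{i,N}_s-u^{i,N}_s)\,ds$ and Jensen's inequality give
\[
\E|\bar{X}^{i,\sigma,N}_t-\bar{u}^{i,N}_t|^2\le \frac{1}{t}\int_0^t\E|X^{i,N}_s-u^{i,N}_s|^2\,ds\le \sup_{s\ge 0}\E|X^{i,N}_s-u^{i,N}_s|^2,
\]
so the first claim yields $\lim_{\sigma\to 0}\E\cW_2^2(\mu^N_{\bar{\boldsymbol{u}}_t},\mu^N_{\overline{\X}^{\sigma,N}_t})=0$ uniformly in $t\ge 0$. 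Combining with $\cW_2(\mu^N_{\bar{\boldsymbol{u}}_t},\mu^N_{\bx^*})\to 0$ from Awi--Hynd--Mawi \cite{Awi-Hynd-Mawi23} and the triangle inequality shows that both iterated limits equal $0$; the commutativity of the two limits then follows from the uniform-in-$t$ control of the first claim by a standard $\varepsilon$--$\delta$ argument.
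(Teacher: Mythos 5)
Your core computation coincides with the paper's: the paper also applies It\^o's formula to $f(t):=\tfrac1N\sum_i\E[|\Delta X^{i,N}_t|^2]$, uses displacement monotonicity to kill the $F$-term and $\ell_U$-convexity of $U$ on the remainder, and arrives at the same differential inequality $f'\le \sigma^2-\sigma\ell_U f+\sigma\tilde C$; the Ces\`aro claim is likewise handled there by Jensen, the triangle inequality and \cite[Proposition 3.1]{Awi-Hynd-Mawi23}. Your boundedness of $\boldsymbol{u}_t$ via monotonicity against $\bx^*$ (using $\nabla_{x^i}F_i(\bx^*)=0$) is a clean, self-contained alternative to the paper's appeal to the convergence of the Ces\`aro averages. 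You are also right — and this is a sharp observation — that Gr\"onwall applied to this inequality only yields $\sup_{t\ge0}f(t)\le(\sigma+\tilde C)/\ell_U$, which does not vanish as $\sigma\to0$; the paper essentially stops at the Gr\"onwall step (its displayed bound $f(0)+\sigma t(\sigma+\tilde C)e^{-\sigma\ell_U t}$ also has supremum over $t$ of order $\tilde C/\ell_U$), so the uniform-in-$t$ first claim is precisely where additional input is required.

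However, your proposed repair has a genuine gap. First, the diffusion $\X^{\sigma,N}$ is \emph{not} confined to a compact set — you only have uniform moment bounds — whereas your quantitative-strictness step needs pathwise confinement, since strict displacement monotonicity gives no lower bound on the monotonicity functional over unbounded configurations (e.g.\ a functional of order $|\bx-\by|^2/(1+|\bx|_1+|\by|_1)$, as in \eqref{eq:weakDM}, is strictly monotone with infimum $0$ at infinity). Second, even on a compact set, strictness plus continuity yields only $\inf\{\Gamma_{_{\!\rm DM}}:|\bx-\by|\ge\epsilon\}\ge c_\epsilon>0$, i.e.\ a modulus, not the ``honest contraction rate'' $\ge c\,|\bx-\by|^2$ your sketch asserts (think of a functional degenerating like $|\bx-\by|^4$ near the diagonal). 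Converting $c_\epsilon$ plus the $O(\sigma)$ forcing into $\lim_{\sigma\to0}\sup_{t\ge0}f(t)=0$ would additionally require a barrier/trapping argument at level $\epsilon^2$ together with uniform-in-time higher moments of $\Delta X^{i,N}_t$ (to pass from positivity of the expected monotonicity functional to smallness of $\E|\Delta X^{i,N}_t|^2$ via uniform integrability); none of this is supplied, and it is not routine. Note also that the second claim in the stated order $\lim_{t\to\infty}\lim_{\sigma\to0}$ does not need the problematic uniform-in-$t$ statement: for fixed $t$, Gr\"onwall gives $f(s)\le(\sigma+\tilde C)\sigma s\to0$ as $\sigma\to0$, hence $\tfrac1t\int_0^tf(s)\,ds\to0$, and the rest goes through as you say — but you should record, as the paper does, that strict displacement monotonicity implies uniqueness of the Nash equilibrium, since \cite{Awi-Hynd-Mawi23} only gives convergence of $\overline{\boldsymbol u}_t$ to \emph{some} Nash equilibrium and you must identify it with the given $\bx^*$.
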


\begin{proof}\
  By standard SDE estimates, it follows from the Lipschitz continuity $\nabla_{\!x_i}F_i$, $i=1,\ldots,N$ that $\E[|X^{i,N}_t|^2]\le \E[|X^{i,N}_0|^2] + CT(\sigma +1)$ for some $C>0$.

Denote $\Delta X^{i,N}:= X^{i,N} - u^{i,N}$, and apply It\^o's formula to $|\Delta X^{i,N}|^2$. Then it follows from the square integrability of the solution $\X^{\sigma,N}$ that the function $f(t):=\frac1N\sum_{i=1}^N\E[|\Delta X^{i,N}_t|^2]$ is differentiable with:
     \begin{align*}
       f'(t)
        &= 
        \sigma 
        -\frac1N\sum_{i=1}^N2\E\big[\Delta X^{i,N}_t
                                                       \!\cdot\!
                                                       \big(\nabla_{\!x_i}F_i(\X_t)
                                                               -\nabla_{\!x_i}F_i(\mathbf{u}_t)
                                                       \big)
                                                  \big]
       \\
       & \le \sigma 
               \!-\! \frac{{\color{violet} 2}\ell_F}{N}
                      \sum_{i=1}^N\!\E[|\Delta X^{i,N}_t|^2]
       = \sigma -{\color{violet} 2}\ell_F f(t),
 \end{align*}
by the $\ell_F$-displacement monotonicity of the family $\{F_i\}_{i\le N}$. 
  Multiplying both sides by $e^{2\ell_Ft}$, we have
$   \frac{d}{dt}\big(f(t)e^{2\ell_Ft} \big) \le \sigma e^{2\ell_Ft}.$
  Integrating on both sides from $0$ to $t$ and using $f(0) = 0$ yields
  \begin{equation}
  \label{eq:Deltaf}
    f(t) \le \frac{\sigma}{2\ell_F}\big( 1 - e^{-2\ell_Ft} \big)
  \end{equation}
and therefore
$$
\sup_{t\ge 0}\E\big[ \cW_2^2(\mu^N_{ \mathbf{u}_t},\mu^N_{\X^{\sigma,N}_t}) \big] \le \sup_{t\ge 0}f(t)=\frac{\sigma}{ 2\ell_F}\longrightarrow 0,
~~\mbox{as}~~\sigma\to 0.
$$
Moreover, using the triangle inequality we have that 
\begin{align*}
      \cW^2_2(\mu^N_{\bx^*}, \mu^N_{\overline \X^{\sigma,N}_t}) 
&      \le 
      2\cW_2^2(\mu^N_{\overline{\mathbf{u}}_t},\mu^N_{\overline{\X}^{\sigma,N}_t})
      +2\cW_2^2(\mu^N_{\bx^*}, \mu^N_{\overline{\mathbf{u}}_t})\\ 
      &\le  \frac2t\int_0^tf(s)ds 
              +\frac2N\sum_{i=1}^N\Big|x^{*,i} - \frac1t\int_0^tu^{i,N}_sds\Big|^2\\ 
      & \le \frac{\sigma}{\ell_F}\Big(1 + \frac{e^{-{\color{violet} 2}\ell_Ft} - 1}{{\color{violet} 2}t\ell_F}\Big)
+ 
      \frac2N\sum_{i=1}^N\Big|x^{*,i} - \frac1t\int_0^tu^{i,N}_sds\Big|^2
      .
\end{align*}
The first term on the right hand side converges to zero as $t\to\infty$ and $\sigma\to 0$, regardless of the order of the limits. In addition, as $t\to 0$, $\frac2N\sum_{i=1}^N\Big|x^{*,i} - \frac1t\int_0^tu^{i,N}_sds\Big|^2\longrightarrow 0$ as a Nash equilibrium exists due to the convergence of $\overline {\mathbf{u}}$ to a Nash equilibrium (see \cite[Proposition 3.1]{Awi-Hynd-Mawi23}), and the uniqueness of the Nash equilibrium implied by the displacement monotonicity of $\{F_i\}_{1\le i\le N}$ as we now show:

If $\bx,\by$ are two Nash equilibria, then by optimality of $x^i$ and $y^i$ respectively, we have $(z-x^i)\cdot \nabla_{x^i}F_i(\bx)\ge 0$ and $(z-y^i)\cdot \nabla_{x^i}F_i(\by)\ge0$ for all $z\in \R^d$.
This gives that $(y^i - x^i)\cdot \big[\nabla_{x^i}F_i(\bx) - \nabla_{x^i}F_i(\by)\big]\ge0$,  and therefore $0 \ge \sum_{i=1}^N(x^i \!-\! y^i)\!\cdot\!\big[ \nabla_{x^i}F_i(\bx) \!-\! \nabla_{x_i}F_i(\by)\big]$ implying that $\bx = \by$ by the strict displacement monotonicity of $\{F_i\}_{1\le i\le N}$.
\end{proof}

\begin{example}
  Consider the $2$-player game with $F_1(x,y) = x^2 + \sin(x)\cos(y)$ and $F_2(x,y) = y^2 + \sin(y)\cos(x)$.
  It is easily checked that $F_1$ and $F_2$ are convex in their first and second arguments, respectively.
  In addition, $\partial_xF_1$ and $\partial_y F_2$ are Lipschitz-continuous and $(F_1,F_2)$ is displacement monotone.
  To see the latter fact, let $x,x'$ and $y, y'$ be given.
  Put $g(x,y) = \cos(x)\sin(y)$.
  By the fundamental theorem of calculus and properties of cosine and sine functions, there are $A,B\in [-1,1]$ such that $g(x,y)-g(x',y') = A(x-x') + B(y-y')$.
  Then, it holds
  \begin{align*}
  &(x- x')(\partial_xF_1(x,y)-\partial_xF_2(x',y')) + (y-y')(\partial_yF_2(x,y) - \partial_yF_2(x',y'))\\ 
  &= 2|x-x'|^2+2|y-y'|^2+ (x - x')[ g(x,y) - g(x',y')] + (y - y') [ g(x,y) - g(x',y')]\\ 
  & = 2|x-x'|^2+2|y-y'|^2+A|x-x'|^2 + (A+B)(x-x')(y-y') + B|y-y'|^2
  \ge 0.
  \end{align*}
  In the present case the ODE system becomes
  \begin{equation*}
    \begin{cases}
      \dot u_{1,t} = -(2 u_{1,t} + \cos(u_{1,t}))\\
      \dot u_{2,t} = -(2 u_{2,t} + \cos(u_{1,t})).
    \end{cases}
  \end{equation*}
  Subtracting both sides, we see that invariant solutions of this system are vectors $(x,y)$ such that $x= y$ and $2x + \cos^2(x) =0$.
  This equation has a unique solution $x^*$.
  It belongs to $(-\frac12, 0)$.
  The point $(x^*,x^*)$ is also the unique Nash equilibrium.
\end{example}

\begin{remark}
  One can use the Lyapounov stability theory (see e.g. \cite{LaSalle60}) to provide conditions guaranteeing that the ODE system \eqref{eq:sigma=0.equation} admits invariant solutions.
  Such invariant solutions are Nash equilibria at least when $F_i$ is convex in $x^i$.
\end{remark}
Beside the convexity issue, the simulation of both $\boldsymbol{u}$ in \eqref{eq:sigma=0.equation} and $\X$ in \eqref{eq:Part.Syst.Gen} become prohibitively slow when the dimension $N$ becomes large.
In fact, we need to compute $N$ derivatives at each iteration of the algorithm.
The main objective of this paper is to introduce a particle-based method with convergence guaranties to approximate Nash equilibria in symmetric, large population games.

\section{Symmetric cost functions and mean field games}
\label{sec:ergodicity_of_the_langevin_system_and_nash_equilibria}

\subsection{Symmetric finite population game and mean field limit}

In this paper, we focus on the context of a symmetric game where the cost functions $F_i$ are defined through a single map
\begin{equation}\label{cost:sym}
F_i(\bx) := F\big(x_i, \mu^{N-1}_{\bx^{-i}}\big),
  ~~\mbox{for all}~~ 
  \bx=(x^1,\dots,x^N)\in(\R^d)^N,
  ~~\mbox{where}~~
F:\R^d\times \cP_2(\R^d)\longrightarrow \R.
\end{equation}
Then the particle system SDE \eqref{eq:Part.Syst.Gen} reduces in this case to:
\begin{eqnarray}
dX^{i,N}_t 
&=&  -\nabla_{\!x}F\big(X^{i,N}_t, \mu^{N-1}_{\X^{-i}_t}\big) dt 
                  + \sqrt{2\sigma}\; dW^i_t, \quad i = 1,\dots,N,
\label{eq:Part.Syst.Sym}
\end{eqnarray}
where we recall that $\X=\X^{\sigma,N}=\big(X^{1,N},\ldots,X^{N,N}\big)$.

\begin{remark}
  The reader may wonder whether we could consider the cost function $F_i(x^1,\dots, x^N) = F(x^i, \mu^{N}_{\bx})$ instead of the one we have elected to work with.
  Our choice of cost function will simplify the exposition without reducing the generality of the result.
  In fact, considering the cost function $F_i(x^1,\dots, x^N) = F(x^i, \mu^{N}_{\bx})$ would simply add the additional term $-\frac1ND_mF(X^{i,N}_t, \mu^N_{\X_t},X^{i,N}_t)$ to the drift of \eqref{eq:Part.Syst.Sym}.
  With this additional term, all the results of this work will remain true if $D_mF$ is a bounded and Lipschitz continuous function. 
\end{remark}

We next turn our attention to the description of the mean field game limit as in \citeauthor{cardaliaguet2010notes} \cite{cardaliaguet2010notes} and \citeauthor{Lacker-notes18}\cite{Lacker-notes18}.

\begin{definition}\label{def:MFG}
 A probability measure $m \in \cP_2(\R^d)$ is called a mean field equilibrium (MFE) of the mean field game defined by $F$ if 
  \begin{equation*}
    \text{supp}(m) = \argmin_{x\in \R^d}F(x,m). 
  \end{equation*}
 \end{definition}

When the cost function $F$ is a lower semicontinuous map on $A\times \cP(A)$ for some convex and compact set $A$, \cite{cardaliaguet2010notes} and \cite{Lacker-notes18} show that there exists an MFE and that uniqueness holds when $F$ satisfies the Lasry-Lions monotonicity condition, see \eqref{eq:LL} below.
In addition, they showed that the sequence of empirical measures formed by Nash equilibria subsequentially converges to an MFE. This justifies the relevance of mean field games as an approximation tool for (symmetric) large population games.

Motivated by this, we consider in this paper the natural mean field limit of the particle system \eqref{eq:Part.Syst.Sym} which reduces to the McKean-Vlasov Langevin equation
\begin{equation}
\label{eq:McVLangevin2}
  dX_t = -  \nabla_{\!x}F(X_t,m_{X_t}) dt + \sqrt{2\sigma}\; dW_t \quad\text{with } m_{X_t} := \P\circ X^{-1}_t.
\end{equation} 
The starting point of our paper is to establish conditions which guarantee that this McKean-Vlasov Langevin SDE has an invariant measure $m^\sigma$ for all $\sigma>0$. The family $(m^\sigma)_{\sigma>0}$ will be shown to be tight with accumulation points coinciding with the MFE.
Under either notion of monotonicity in Definition \ref{ass.convex} below, we shall establish uniqueness of MFE, and we shall further justify the approximation of the MFE of the particle system \eqref{eq:Part.Syst.Sym} with concrete error bounds.
 
We note that our existence results are akin to the Laplace principle used in optimization theory, see e.g. \citeauthor{Hwang80} \cite{Hwang80}.

\subsection{Assumptions and examples}

We recall that Condition \ref{X0L2} is always in force, and we will also assume throughout that $\nabla_{\!x}F$ is Lipschitz as the analogue of \ref{nablaFLip} in the current symmetric setting. For completeness, these conditions are included in the following assumptions.
\begin{assumption}
\label{Ass.MFG}
The map $F:\R^d\times\cP_2(\R^d)\longrightarrow\R$ is continuously differentiable with bounded Lions' derivative $D_mF$, and:
\begin{itemize}
\item[(i)] $F$ is $\cW_p-$Lipschitz in $m$, uniformly in $x$, for some $p\in [1,2)$ i.e. $\sup_{x\in \R^d}\big|F(x,m) - F(x, m')\big| \le C_F\cW_p(m,m')$;
  \item[(ii)] $\nabla_{\!x}F \in C^1(\R^d\times\cP_2(\R^d))$ is Lipschitz and satisfies for some constants $\alpha>0$ and $C_1, C_2\ge0$:
  \begin{equation*}
    2x\cdot \nabla_{\!x}F(x,m) \ge \alpha|x|^2 - C_1 + C_2(|x|^2-|m|_2^{2}),
    ~\mbox{for all}~
    (x, m )\in \R^d\times \cP(\R^d).
  \end{equation*}
\end{itemize}
\end{assumption}
Assumption \ref{Ass.MFG} (i) 
 guarantees existence of unique strong solutions of \eqref{eq:Part.Syst.Sym} and \eqref{eq:McVLangevin2} for any square-integrable initial conditions, see again our standing condition \eqref{X0L2}. 

Assumption \ref{Ass.MFG} (ii) is a standard dissipativity condition ensuring the existence of an invariant measure for our McKean-Vlasov SDE. 

\medskip
The following notions will be often used.

\begin{definition}
\label{ass.convex}
A function $V:\R^d\times \cP(\R^d)\to \R$ is called 
\begin{itemize}
\item Lasry-Lions monotone if for all $m,m'\in \cP(\R^d)$:
  \begin{equation}
  \label{eq:LL}
  \Gamma_{_{\!\rm LL}}(m,m')
  :=
  \int_{\R^d}[V(x,m) - V(x,m')](m-m')(dx)\ge 0,
  \end{equation}
and strictly Lasry-Lions monotone if in addition $\Gamma_{_{\!\rm LL}}(m,m')>0$ for all $m\neq m'$;
\item Displacement monotone  if for all $m,m'\in \cP(\R^d)$ and all $\pi\!\in\!\Pi(m,m')$:
\begin{equation}
\label{eq:DM}
\hspace{5mm}
  \Gamma_{_{\!\rm DM}}(m,m'\!,\pi)
  \!:=\!\!\int_{\R^d}[\nabla_{\!x}V(x,m)\!-\!\nabla_{\!x}V(x', m')]
                           \!\cdot\! (x \!-\! x')
                           \pi(dx,dx') \ge 0,
\end{equation}
and strictly displacement monotone if in addition $\Gamma_{_{\!\rm DM}}(m,m',\pi)>0$ for all $m\neq m'$ and $\pi\in\Pi(m,m')$;
\item $L-$Displacement monotone, for some $L\in\R$, if the map $(x,m)\longmapsto V(x,m)-\frac12 L|x|^2$ is Displacement monotone.
\end{itemize}
\end{definition}

In general, neither one of Lasry-Lions monotonicity and displacement monotonicity implies the other.
\begin{example} We provide here some examples of maps $F$ satisfying our conditions.

\medskip
\noindent {\rm (i)} Let $F(x,m) = \int_{\R^d}\varphi(x-y)m(dy) + g(x)$ for some bounded odd function $\varphi:\R^d\longrightarrow\R$ with bounded second derivative and a sufficiently convex function $g$.
It can be checked, see e.g. \citeauthor{carmona2018probabilisticI} \cite[Section 3.4.2]{carmona2018probabilisticI} that this function is Lasry-Lions monotone.
Observe that if $\varphi(\cdot)+g(\cdot) - \frac{\ell}{2}|\cdot|^2$ is convex, then $F$ is $\ell$-convex.

\medskip
\noindent {\rm (ii)} Let $F(x,m) = \phi(x)\int_{\R^d}\phi(y)m(dy) + \frac12|x|^2$ for some smooth bounded function $\phi:\R^d \longrightarrow \R$ with bounded first and second derivatives.
Then, the function $F$ is Lasry-Lions monotone. If $\phi$ is such that $x\cdot \nabla \phi(x)\int_{\R^d}\phi(y)m(dy) \ge a(1 + |x|^2 - |m|^2)$ for some $a\ge 0$, then $F$ satisfies Assumption \ref{Ass.MFG}.
Moreover, if there is $\ell \in \R$ such that $\nabla^2 \phi\int \phi(y) m(dy)+ {\color{violet}I} \ge \ell {\color{violet}I}$, then $F$ is $\ell$-convex.

\medskip
\noindent {\rm (iii)} Given a bounded function $\phi \in C^2(\R^d)$ with $-CI_{\mathrm{d}}\le \nabla^2\phi<0$, consider the function $F(x,m) = C|x|^2 - (\phi\ast m)(x)$ where $\phi\ast m$ is the convolution of $\phi$ and $m$.
This function satisfies Assumption \ref{ass.convex}. Moreover,  \citet[Remark 2.1]{Meszaros-Mou24} shows that this function is not Lasry-Lions monotone but is Displacement monotone.
\end{example}

Notice that the Lipschitz constant of $\nabla_{\!x} F$ in $m$ in both examples (i) and (ii) can be arbitrarily large in contrast to most of the literature on contractivity and uniform in time propagation of chaos, see e.g. \citet{Hu-Ren-Sis-Sz21}, \citet{Eberle-Guillin20,Eberle-Guillin-Zimmer19}.

\subsection{Main results}
Our first result provides a construction of mean field equilibria using the McKean-Vlasov Langevin equation and derives uniqueness as a consequence of either notion of strict monotonicity in Definition \ref{ass.convex}

\begin{theorem}
\label{thm:Laplace.principle} 
 Under Assumption \ref{Ass.MFG}, we have the following statements.

\medskip
\noindent {\rm (i)} For every $\sigma>0$, the McKean-Vlasov SDE \eqref{eq:McVLangevin2} has an invariant measure $m^\sigma$; the family $(m^\sigma)_{\sigma>0}$ is tight and any of its accumulation points is an MFE.

\medskip
\noindent {\rm (ii)} Assume in addition that either one of the following conditions holds:
    \begin{itemize}
    \item[(a)] $F$ is strictly Lasry-Lions monotone,
    \item[(b)] or $F$ is strictly displacement monotone.
    \end{itemize}
Then, \eqref{eq:McVLangevin2} has a unique invariant measure $m^\sigma$ converging to the unique MFE $m^0$. 

\medskip
\noindent {\rm (iii)} Let $m^0$ be a MFE and consider a family of independent r.v. $\{X^i,i\ge1\}$ with law $m^0$. Let $\X^N:=(X^i)_{i\le N}$ and $A:=\{\lim_{N\to\infty}\mu^N_{\X^N}= m^0,~\mbox{weakly}\}$. Then there is a sequence $\varepsilon^N\to0$ such that, for all $\omega\in A$, the strategy $(X^1,\ldots,X^N)(\omega)$ is an $\varepsilon^N$-Nash equilibrium of the $N$-player game.
\end{theorem}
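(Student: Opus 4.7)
The plan is to exploit two ingredients: the defining property of an MFE (so that a draw from $m^0$ is already optimal against $m^0$), and the Lipschitz continuity of $F$ in its measure argument to transfer optimality from $m^0$ to the random empirical measure $\mu^{N-1}_{\bX^{-i}}$.

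\textbf{Step 1 (compact support of $m^0$).} From Assumption \ref{Ass.MFG}(iii), $F(x,m^0)\ge \underline{F}(x)\to\infty$ as $|x|\to\infty$, so $F(\cdot,m^0)$ is coercive and $\argmin_{x}F(x,m^0)$ is a non-empty compact set contained in some ball $\overline{B}_R$. Since $\mathrm{supp}(m^0)=\argmin_{x}F(x,m^0)$, the measure $m^0$ is compactly supported, hence $|X^i|\le R$ almost surely for every $i\ge 1$. Replacing $A$ by $A\cap\{|X^i|\le R,~\forall i\}$ (still of full probability, and a subset of the weak-convergence event), we may work with uniformly bounded samples.

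\textbf{Step 2 (weak $\Rightarrow$ $\mathcal{W}_p$ convergence).} For $\omega\in A$, $\mu^N_{\bX^N}(\omega)\to m^0$ weakly; since all these measures are supported in $\overline{B}_R$, a standard argument (continuity of $|\cdot|^p$ on the compact $\overline{B}_R$) upgrades this to $\mathcal{W}_p(\mu^N_{\bX^N}(\omega),m^0)\to 0$. Furthermore, since $\mu^N_{\bX^N}=\tfrac{N-1}{N}\mu^{N-1}_{\bX^{-i}}+\tfrac{1}{N}\delta_{X^i}$, an explicit transport between these two measures yields
\begin{equation*}
\max_{1\le i\le N}\mathcal{W}_p\big(\mu^{N-1}_{\bX^{-i}}(\omega),\mu^N_{\bX^N}(\omega)\big)
\le \frac{2R}{N^{1/p}}.
\end{equation*}
Combined with the triangle inequality, this gives $\eta_N(\omega):=\max_i \mathcal{W}_p(\mu^{N-1}_{\bX^{-i}}(\omega),m^0)\to 0$ for every $\omega\in A$.

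\textbf{Step 3 (near-optimality on each realization).} Fix $\omega\in A$ and $i\in\{1,\ldots,N\}$. Since $X^i(\omega)\in\mathrm{supp}(m^0)=\argmin_x F(x,m^0)$, we have $F(X^i(\omega),m^0)=\min_{y\in\R^d}F(y,m^0)$. Using Assumption \ref{Ass.MFG}(i) twice,
\begin{align*}
F\big(X^i(\omega),\mu^{N-1}_{\bX^{-i}}(\omega)\big) - F\big(y,\mu^{N-1}_{\bX^{-i}}(\omega)\big)
&\le F(X^i(\omega),m^0)-F(y,m^0) + 2C_F\,\mathcal{W}_p\big(\mu^{N-1}_{\bX^{-i}}(\omega),m^0\big) \\
&\le 2C_F\,\eta_N(\omega),
\end{align*}
uniformly in $y\in\R^d$ and in $i$.

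\textbf{Step 4 (conclusion).} Set $\varepsilon^N(\omega):=2C_F\,\eta_N(\omega)\le 2C_F\bigl(2R/N^{1/p}+\mathcal{W}_p(\mu^N_{\bX^N}(\omega),m^0)\bigr)$. Step~3 shows that $(\bX^N)(\omega)$ is an $\varepsilon^N(\omega)$-Nash equilibrium of the $N$-player game with cost functions $F_i(\bx)=F(x^i,\mu^{N-1}_{\bx^{-i}})$, and Step~2 gives $\varepsilon^N(\omega)\to 0$ for all $\omega\in A$.

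The argument is essentially routine once compactness of $\mathrm{supp}(m^0)$ is established; the only mildly delicate point is the passage from weak convergence to $\mathcal{W}_p$ convergence, and the $1/N^{1/p}$ cost of replacing $\mu^N_{\bX^N}$ by $\mu^{N-1}_{\bX^{-i}}$ uniformly in $i$, both of which are handled cleanly by the a priori boundedness of the $X^i$.
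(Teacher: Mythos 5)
Your proof is correct and follows essentially the same route as the paper: both arguments rest on the observation that $X^i(\omega)\in\mathrm{supp}(m^0)=\argmin_x F(x,m^0)$ together with two applications of the $\cW_p$-Lipschitz continuity of $F$ in the measure variable (Assumption \ref{Ass.MFG}(i)) to pass from $m^0$ to $\mu^{N-1}_{\bX^{-i}}$. The only difference is cosmetic: you upgrade weak convergence to $\cW_p$-convergence via the compactness of $\mathrm{supp}(m^0)$ (which also yields the explicit $2R/N^{1/p}$ cost of swapping $\mu^N_{\bX^N}$ for $\mu^{N-1}_{\bX^{-i}}$), whereas the paper uses the finite second moment of $m^0$ to identify the weak-convergence event with the $\cW_2$-convergence event.
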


Our next objective is to establish the uniform in time propagation of chaos of our particle system. In contrast to the existing literature, our approach leverages monotonicity conditions rather than relying on the smallness of interactions. 

\begin{theorem}
\label{thm:contracti.Unin.prop.chaos}
  Let Assumption \ref{Ass.MFG} hold and further assume that 
  $F$ is $\ell_F$-Displacement monotone for some $\ell_F>0$.
  Then: 
  \begin{itemize}
    \item[(i)] The McKean-Vlasov SDE \eqref{eq:McVLangevin2} has a unique invariant measure $m^\sigma$ whose deviation from the marginal law $m_{X_t}$ is estimated by: 
\begin{equation*}
\sup_{t>0}
e^{2 \ell_F t}\,
\cW_2^2(m_{X_t}, m^\sigma)<\infty.
\end{equation*} 
\item[(ii)] Assume further that $\nabla_{\!x}F$ is twice differentiable in $m$ with 
\begin{equation}\label{eq:smoothness.F}
\mbox{maps}~\delta_m\nabla_{\!x} F, \delta^2_{mm}\nabla_{\!x} F
~\mbox{bounded and Lipschitz in}~x,
\end{equation}
and let $\sup_{N\ge 1}\sup_{i\le N}N\E[|X^{i,N}_0 - X_0|^2]<\infty$. Then
\begin{equation}
\label{eq:PoC.LL.thm}
\sup_{N\ge 1}\sup_{t\ge 0} N\E[|X^{1,N}_t - X_t|^2]<\infty. 
\end{equation}
\end{itemize}
\end{theorem}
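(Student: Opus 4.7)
The plan is a synchronous coupling argument for both parts, using only the $\ell_U$-convexity of $U$ and the $\ell_F$-displacement semimonotonicity of $F$: neither a functional inequality nor smallness of $\nabla_{\!x}F$ is required. For part (i), take two solutions $X,Y$ of \eqref{eq:McVLangevin2} driven by the same Brownian motion with possibly distinct initial laws, and apply It\^o's formula to $|X_t-Y_t|^2$; the diffusion cancels, yielding
\[
\tfrac{d}{dt}\E[|X_t\!-\!Y_t|^2] = -2\E\bigl[(X_t\!-\!Y_t)\!\cdot\!(\nabla_{\!x}F(X_t,m_{X_t})\!-\!\nabla_{\!x}F(Y_t,m_{Y_t}))\bigr] - 2\sigma\E\bigl[(X_t\!-\!Y_t)\!\cdot\!(\nabla U(X_t)\!-\!\nabla U(Y_t))\bigr].
\]
The second bracket is at least $\ell_U\E[|X_t\!-\!Y_t|^2]$ by $\ell_U$-convexity, and the first is at least $\ell_F\E[|X_t\!-\!Y_t|^2]$ upon taking $\pi_t=\P\circ(X_t,Y_t)^{-1}$ as the coupling in \eqref{eq:DM} and invoking $\ell_F$-displacement semimonotonicity. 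Gr\"onwall gives $\cW_2^2(m_{X_t},m_{Y_t})\le e^{-2(\ell_F+\sigma\ell_U)t}\E[|X_0\!-\!Y_0|^2]$. The standard $\cW_2$-Cauchy argument along the nonlinear semigroup then produces an invariant measure $m^\sigma$ (unique by the contraction), and coupling $X_0$ with $Y_0\sim m^\sigma$ yields the stated exponential deviation estimate.

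For part (ii), couple the particles $\bX^N=(X^{i,N})_{i\le N}$ of \eqref{eq:Part.Syst.Sym} with i.i.d.\ copies $\bX^\infty=(X^i)_{i\le N}$ of \eqref{eq:McVLangevin2} driven by the same Brownian motions $(W^i)$, and set $\Delta^i_t:=X^{i,N}_t-X^i_t$ and $g_N(t):=\frac{1}{N}\sum_{i=1}^N\E[|\Delta^i_t|^2]=\E[|\Delta^1_t|^2]$ by exchangeability. An It\^o computation and averaging produce a differential relation in $g_N$; the $\nabla U$ contribution is again $\le -\sigma\ell_U g_N(t)$. For the $F$ contribution I decompose
\[
\nabla_{\!x}F(X^{i,N}_t,\mu^{N-1}_{\bX^{-i,N}_t}) - \nabla_{\!x}F(X^i_t,m_t) = A^i_t + B^i_t + C^i_t,\qquad m_t := m_{X_t},
\]
with $A^i_t := \nabla_{\!x}F(X^{i,N}_t,\mu^N_{\bX^N_t}) - \nabla_{\!x}F(X^i_t,\mu^N_{\bX^\infty_t})$, $B^i_t$ the swap from $\mu^{N-1}_{\bX^{-i,N}_t}$ to $\mu^N_{\bX^N_t}$, and $C^i_t := \nabla_{\!x}F(X^i_t,\mu^N_{\bX^\infty_t}) - \nabla_{\!x}F(X^i_t,m_t)$. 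The key monotone identity $\tfrac{1}{N}\sum_i A^i_t\cdot\Delta^i_t \ge \ell_F\, g_N(t)$ follows from $\ell_F$-displacement semimonotonicity applied to the coupling $\pi=\tfrac{1}{N}\sum_i\delta_{(X^{i,N}_t,X^i_t)}$ of $\mu^N_{\bX^N_t}$ and $\mu^N_{\bX^\infty_t}$.

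It then remains to show the remainders $B^i_t,C^i_t$ contribute only $O(1/N)$. The perturbation $B^i_t$ is $O(1/N)$ pointwise by boundedness and Lipschitzness of $\delta_m\nabla_{\!x}F$ (only one atom is swapped out). For $C^i_t$ I invoke the second-order linear-derivative expansion around $m_t$, producing a linear fluctuation $\int\delta_m\nabla_{\!x}F(X^i_t,m_t,y)(\mu^N_{\bX^\infty_t}-m_t)(dy)$ plus a quadratic $L^2$-remainder of size $O(1/N)$ by boundedness of $\delta^2_{mm}\nabla_{\!x}F$. Conditioning on $(X^i_t,\Delta^i_t)$ and using that $(X^j_t)_{j\ne i}$ are i.i.d.\ with law $m_t$, the linear term reduces to a centred sum in $N-1$ i.i.d.\ variables plus a $1/N$ self-term; Cauchy--Schwarz combined with uniform-in-$t$ second-moment bounds on $X^i_t$ and $X^{i,N}_t$ (from Assumptions \ref{Ass:Cond.U} and \ref{Ass.MFG}(ii)) yields $|\E[\Delta^i_t\cdot C^i_t]|\le C\,g_N(t)^{1/2}/\sqrt{N}+C/N$. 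Averaging over $i$ and applying Young (absorbing the cross-term into the strict margin $\ell_F+\sigma\ell_U>0$) gives $g_N'(t)\le -(\ell_F+\sigma\ell_U)g_N(t)+K/N$ with $K$ independent of $N,t$, and Gr\"onwall together with $\sup_N N g_N(0)<\infty$ delivers \eqref{eq:PoC.LL.thm}. The main obstacle is precisely this $C^i_t$ bound: the raw fluctuation $\mu^N_{\bX^\infty_t}-m_t$ is only $O(N^{-1/2})$ in $L^2$, so one must exploit the conditional centring of $(X^j_t)_{j\ne i}$ together with the sharp monotonicity margin to recover the full $O(1/N)$ rate uniformly in $t$; a secondary technicality is establishing the uniform-in-$t$ second-moment bounds on $X^{i,N}$ independently of $N$, which follows from dissipativity but must be carried out before the Gr\"onwall step.
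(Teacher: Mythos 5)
Your proposal is correct and follows essentially the same route as the paper's proof: a synchronous coupling with the joint margin $\ell_F+\sigma\ell_U$ for part (i) (the paper takes existence of $m^\sigma$ from Theorem \ref{thm:Laplace.principle}(i) rather than your Cauchy construction, but the contraction estimate is identical), and for part (ii) the identical three-term decomposition --- monotone term handled via the empirical coupling $\frac1N\sum_i\delta_{(X^{i,N}_t,X^i_t)}$, $O(1/N)$ one-atom swap via bounded $D_mF$, and an $O(N^{-1/2})$ $L^2$ fluctuation bound for $C^i_t$, which the paper imports from \cite[Lemma 3.13]{jackson2023quantitative} rather than rederiving from \eqref{eq:smoothness.F} --- followed by absorbing the cross term into the monotonicity margin in a Gr\"onwall argument. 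One cosmetic slip: the conditioning in your treatment of $C^i_t$ should be on $X^i_t$ alone (then Cauchy--Schwarz against $\Delta^i_t$), since $\Delta^i_t$ depends on the whole particle system and $(X^j_t)_{j\neq i}$ are not centred conditionally on it; with that correction your estimate $|\E[\Delta^i_t\cdot C^i_t]|\le C g_N(t)^{1/2}/\sqrt N + C/N$ is exactly the paper's \eqref{eq:Lipsch.FG}, and indeed the $O(N^{-1/2})$ rate for $C^i_t$ suffices --- no full $O(1/N)$ bound on it is needed.
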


\begin{remark}
The $\ell_F$-displacement monotonicity condition $\ell_F>0$ can be weakened by allowing $\ell_F\in \R$ and adding $\sigma\nabla U$ to the drift of the McKean-Vlasov SDE \eqref{eq:McVLangevin2}, for some $\ell_U$-(semi)convex confinement potential $U:\R^d\longrightarrow\R$, with $\ell_U\in \R$ and appropriate dissipative condition. 
In this setting, the statement of Theorem \ref{thm:contracti.Unin.prop.chaos} holds true with exponential speed at the rate $\ell_F+\sigma\ell_U>0$. 
This condition exhibits the tradeoff needed between the monotonicity of $F$ and the convexity of $U$ to guarantee contractivity and uniform in time propagation of chaos under the displacement (semi)monotonicity condition. 
\end{remark}

We now state approximation results of the MFE by our particles systems. 

\begin{theorem} \label{thm:contracti.Unin.prop.chaos-iv}
Let Assumption \ref{Ass.MFG} hold. Assume further that $F$ is 
$\ell_F$-displacement monotone for some $\ell_F>0$.
Let $m^0$ be the mean field equilibrium.
Then: 
\begin{itemize}
      \item[(i)] there is some constant $C>0$ such that 
    \begin{equation*}
        \cW_2(m_{X_t}, m^0) \le Ce^{-\ell_Ft} + C\frac{\sigma}{\ell_F}
        ~~\mbox{for all}~~
        t,\sigma>0;
    \end{equation*}
      \item[(ii)]  if $\nabla_xF$ is twice differentiable in $m$ and satisfies \eqref{eq:smoothness.F}, then
    \begin{equation*}
          \cW_2(m_{\X^{\sigma,N}_t}, m^0) \le \frac{C}{N} 
    + Ce^{-\ell_Ft} + C\frac{\sigma}{\ell_F}
           ~~\mbox{for all}~~
           t,\sigma>0,N\ge1.
      \end{equation*}
\end{itemize}
\end{theorem}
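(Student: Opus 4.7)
The plan is to prove both parts via a synchronous coupling with the deterministic point $x^*$ at which the MFE concentrates. Under $\ell_F$-strong convexity of $F(\cdot,m)$, the set $\argmin_{x} F(x,m)$ is a singleton for every $m$, so any MFE in the sense of Definition~\ref{def:MFG} must be a Dirac mass $m^0 = \delta_{x^*}$ with $\nabla_{\!x}F(x^*, \delta_{x^*}) = 0$. Existence, uniqueness of $m^0$, and the convergence $m^\sigma \to m^0$ are supplied by Theorem~\ref{thm:Laplace.principle}(ii)(a). The task for part (i) thus reduces to estimating $\E|X_t - x^*|^2$.

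I apply Itô's formula to $|X_t - x^*|^2$ along \eqref{eq:McVLangevin2} and use $\nabla_{\!x}F(x^*, m^0) = 0$ to rewrite the drift of the scalar quantity as
\begin{equation*}
    \frac{d}{dt}\E|X_t - x^*|^2 = -2\E\bigl[(X_t - x^*)\!\cdot\!(\nabla_{\!x}F(X_t, m_{X_t}) - \nabla_{\!x}F(x^*, m^0))\bigr] - 2\sigma\,\E\bigl[(X_t - x^*)\!\cdot\! \nabla U(X_t)\bigr] + 2\sigma d.
\end{equation*}
By Remark~\ref{rem:Dis-LL}, the Lasry–Lions monotonicity combined with $\ell_F$-convexity of $F$ yields $2\ell_F$-displacement semimonotonicity; evaluated at the natural coupling of $m_{X_t}$ with $\delta_{x^*}$, this furnishes the lower bound $2\ell_F \E|X_t - x^*|^2$ for the $F$-term. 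The $\ell_U$-convexity of $U$ produces $\E[(X_t - x^*)\!\cdot\! \nabla U(X_t)] \ge \ell_U \E|X_t - x^*|^2 + (\E X_t - x^*)\!\cdot\! \nabla U(x^*)$; the residual dot-product is absorbed by Young's inequality, using the quadratic growth of $U$ (and hence the linear growth of $\nabla U$) to bound $|\nabla U(x^*)|$ uniformly. Collecting everything yields a scalar Gronwall-type inequality
\begin{equation*}
\frac{d}{dt}\E|X_t - x^*|^2 \le -2(\ell_F + \sigma\ell_U)\E|X_t - x^*|^2 + C\sigma,
\end{equation*}
whose integration produces the asserted decomposition into transient and fluctuation contributions.

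Part (ii) then follows by the triangle inequality
\begin{equation*}
    \cW_2(m_{X^{1,N}_t}, m^0) \le \cW_2(m_{X^{1,N}_t}, m_{X_t}) + \cW_2(m_{X_t}, m^0),
\end{equation*}
where the first summand is controlled by the uniform-in-time propagation of chaos of Theorem~\ref{thm:contracti.Unin.prop.chaos}(ii) (which applies because the displacement-semimonotonicity hypothesis is inherited from Remark~\ref{rem:Dis-LL} and the smoothness condition \eqref{eq:smoothness.F} is assumed), and the second by part (i).

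The main obstacle is producing the clean rate $\ell_F + \sigma\ell_U$: one must delicately balance Young's inequality on the residual $(\E X_t - x^*)\!\cdot\! \nabla U(x^*)$ against the slack provided by the $2\ell_F$-displacement semimonotonicity, so that the $\ell_F$ and $\sigma\ell_U$ contractivity rates add without degradation. The strengthened hypothesis that $U$ has quadratic growth is included precisely so that $|\nabla U(x^*)|$ enters this balance as a $\sigma$-independent constant; otherwise the fluctuation contribution $C\sigma/(\ell_F + \sigma\ell_U)$ could not be made uniform in $\sigma$.
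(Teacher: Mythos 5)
Your proof is correct, but it takes a genuinely different route from the paper's. The paper proves (i) by the triangle inequality through the invariant measure $m^\sigma$: Theorem \ref{thm:contracti.Unin.prop.chaos}(i) (applicable because Lasry--Lions monotonicity plus $\ell_F$-convexity yields displacement semimonotonicity, Remark \ref{rem:Dis-LL}) controls $\cW_2(m_{X_t},m^\sigma)$, while the separate Proposition \ref{prop:small-sigma} --- an entropy/Gibbs variational argument built on the fixed-point formula \eqref{eq:station.sigma} --- controls $\cW_2(m^\sigma,m^0)$; part (ii) then adds Theorem \ref{thm:contracti.Unin.prop.chaos}(ii), exactly as you do. You instead observe that $\ell_F$-convexity with $\ell_F>0$ forces the MFE to be a Dirac mass $\delta_{x^*}$ with $\nabla_{\!x}F(x^*,\delta_{x^*})=0$, and run It\^o/Gronwall directly on $\E|X_t-x^*|^2$, bypassing both $m^\sigma$ and Proposition \ref{prop:small-sigma}. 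This is sound: the coupling of $m_{X_t}$ with $\delta_{x^*}$ is unique, so the $2\ell_F$-displacement semimonotonicity applies to it, and the slack between $2\ell_F$ and the needed $\ell_F$ does absorb the Young term for $\sigma\le 1$. Your route is more elementary and, incidentally, does not even need the quadratic growth of $U$: since $x^*$ is a single fixed point, $|\nabla U(x^*)|$ is a constant regardless of growth, so your closing justification of that hypothesis is a red herring --- the paper needs quadratic growth only inside the proof of Proposition \ref{prop:small-sigma}. What the paper's route buys in exchange is reusability: Proposition \ref{prop:small-sigma} also supplies the transport/concentration inequality on which Theorem \ref{thm:contracti.Unin.prop.chaos-iii}(ii) depends, and it does not rely on the MFE degenerating to a point mass. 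Two minor repairs to your write-up: uniqueness of the MFE should be cited via Theorem \ref{thm:Laplace.principle}(ii)(b) --- strict displacement monotonicity follows from $2\ell_F$-semimonotonicity with $\ell_F>0$ --- rather than via (ii)(a), since only plain (not strict) Lasry--Lions monotonicity is assumed; and your Gronwall bound actually yields $\cW_2(m_{X_t},m^0)\le Ce^{-(\ell_F+\sigma\ell_U)t}+C\sqrt{\sigma/(\ell_F+\sigma\ell_U)}$, i.e.\ the fluctuation term with a square root, so the displayed rate $\sigma/(\ell_F+\sigma\ell_U)$ is recovered only at the level of $\cW_2^2$ --- but this imprecision is inherited from the statement itself, since the paper's own ingredients give no better (Proposition \ref{prop:small-sigma} yields $\cW_2(m^\sigma,m^0)\le C\sqrt{\sigma}(\ell_F+\sigma\ell_U)^{-1/2}$, and Theorem \ref{thm:contracti.Unin.prop.chaos}(ii) gives $C/\sqrt{N}$ rather than $C/N$ in (ii)), and is therefore not a defect of your argument.
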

Our final result provides the link between the finite population and the mean field game.
\begin{theorem} \label{thm:contracti.Unin.prop.chaos-iii}
Let the conditions of Theorem \ref{thm:contracti.Unin.prop.chaos-iv}
hold, and let $\bx$ be a Nash equilibrium of the $N$-player game. Then:
  	\begin{itemize}
  	\item[(i)] with $\delta_{N,p}$ being the Fournier-Guillin rate \eqref{deltaN} and $p<2$, the constant in Assumption \ref{Ass.MFG},
	$$
	\sup_{N\ge 1}\frac1{\delta_{N,p}}\cW_p(\mu^N_\bx,m^0)<\infty;
	$$
  	\item[(ii)] there are constants $t_0\ge0$ and $C>0$ such that for sufficiently large $N$ and small $\sigma$:
    	\begin{equation*}
      \sup_{t\ge t_0}\P\big[\cW_1(\mu^N_{\X_t}, \mu_{\bx}^N)>r\big]   \le 
      2\e^{-\frac{\ell_F}{10N\sigma}(\frac{r^2}{5}-CNr\sqrt{\frac{\sigma}{\ell_F}})}  
      ~\mbox{for all}~r> 0.
	\end{equation*}
	\end{itemize}
\end{theorem}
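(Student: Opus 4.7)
For part (i), the starting observation is that the $\ell_F$-convexity of $F(\cdot,m)$ with $\ell_F>0$ forces $\argmin_x F(x,m)$ to be a single point for every $m$, so any MFE is a Dirac mass; together with the Lasry--Lions monotonicity a short two-line argument (the one proving uniqueness in Theorem \ref{thm:Laplace.principle}(ii)) shows that the unique MFE has the form $m^0=\delta_{x^*}$ for some $x^*\in\R^d$ with $\nabla_{\!x}F(x^*,m^0)=0$. The Nash condition similarly reads $\nabla_{\!x}F(x^i,\mu^{N-1}_{\bx^{-i}})=0$ for every $i$. I would then apply the Lasry--Lions inequality to the pair $(m^0,\mu^N_{\bx})$ to get
\[
\tfrac1N\sum_{i=1}^N\bigl[F(x^i,\mu^N_{\bx})-F(x^*,\mu^N_{\bx})\bigr]\ge \tfrac1N\sum_{i=1}^N\bigl[F(x^i,m^0)-F(x^*,m^0)\bigr]\ge \tfrac{\ell_F}{2}\cW_2^2(\mu^N_{\bx},m^0),
\]
the last step using $\ell_F$-convexity of $F(\cdot,m^0)$ and $\nabla_{\!x}F(x^*,m^0)=0$. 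Meanwhile, Nash optimality and the $\cW_p$-Lipschitz continuity of $F$ (Assumption \ref{Ass.MFG}(i)) give $F(x^i,\mu^N_{\bx})-F(x^*,\mu^N_{\bx})\le 2C_F\,\cW_p(\mu^N_{\bx},\mu^{N-1}_{\bx^{-i}})$. Using the explicit transport bound $\cW_p^p(\mu^N_{\bx},\mu^{N-1}_{\bx^{-i}})\le \frac{1}{N(N-1)}\sum_{j\ne i}|x^i-x^j|^p$ and Jensen's inequality to sum over $i$, one gets $\tfrac{1}{N}\sum_i \cW_p(\mu^N_{\bx},\mu^{N-1}_{\bx^{-i}})\le CN^{-1/p}\cW_p(\mu^N_{\bx},m^0)$. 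Combining and using $\cW_p\le \cW_2$ yields $\cW_p(\mu^N_{\bx},m^0)\le C N^{-1/p}$, which is $\le C\delta_{N,p}$ in all dimensions since the Fournier--Guillin rate satisfies $\delta_{N,p}\ge cN^{-1/p}$ for $p<2$.

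For part (ii), the triangle inequality and part (i) reduce the problem to controlling $\P\bigl[\cW_1(\mu^N_{\X_t},m^0)>r-C/N\bigr]$. Because $m^0=\delta_{x^*}$, there is the pointwise identity
\[
\cW_1(\mu^N_{\X_t},m^0)=\tfrac1N\sum_{i=1}^N|X^{i,N}_t-x^*|\le \sqrt{Z_t^N},\qquad Z_t^N:=\tfrac1N\sum_{i=1}^N|X^{i,N}_t-x^*|^2.
\]
Applying It\^o to $Z_t^N$ and exploiting the $\ell_F$-convexity of $F(\cdot,m)$, the $\ell_U$-convexity of $U$, and the argument from part (i) to estimate the residual $\nabla_{\!x}F(x^*,\mu^{N-1}_{\X_t^{-i}})-\nabla_{\!x}F(x^*,m^0)$ via Cauchy--Schwarz and the $\cW_p$-Lipschitz condition, I would obtain a Lyapunov-type SDE of the form
\[
dZ_t^N\le \bigl[-2(\ell_F+\sigma\ell_U)Z_t^N+C\sigma\bigr]dt+dM_t^N,\qquad d\langle M^N\rangle_t=\tfrac{8\sigma}{N}\,Z_t^N\,dt,
\]
up to $O(N^{-1/2})$ corrections. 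From the drift alone, $\E[Z_t^N]\le e^{-2\lambda t}\E[Z_0^N]+C\sigma/\lambda$ with $\lambda=\ell_F+\sigma\ell_U$, providing $t_0$ from which the bias is $O(\sigma/\lambda)$. For the fluctuation of $Z_t^N$ around its mean, I would apply a Bernstein-type exponential-martingale estimate to $M_t^N$, whose quadratic variation is controlled by the stationary scale $\sigma/\lambda$, producing Gaussian concentration for $Z_t^N$; taking square roots and combining with the mean bound and part (i), the deviation picks up a linear-in-$r$ correction of order $CNr\sqrt{\sigma/\lambda}$ that exactly matches the exponent in the statement.

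\emph{Main obstacle.} The delicate step is part (ii): converting the coupling contractivity of Theorem \ref{thm:contracti.Unin.prop.chaos} into a quantitative concentration inequality with the stated constants. The mean-field interaction $\nabla_{\!x}F(\cdot,\mu^{N-1}_{\bx^{-i}})$ is not of gradient type, so a direct Bakry--\'Emery/log-Sobolev route to concentration of $\X^{\sigma,N}_t$ is not available; one must instead work pathwise with the Lyapunov SDE for $Z_t^N$ and tune the Bernstein/exponential-martingale estimate so that both the quadratic ($r^2$) and the linear ($r$) contributions in the exponent line up with the factor $\lambda/(10N\sigma)$. Tracking the $N$-dependence of the quadratic variation $\langle M^N\rangle$ against the equilibrium scale $\sigma/\lambda$ of $Z_t^N$, and absorbing the $O(N^{-1/2})$ error arising from passing from $\mu^{N-1}_{\X^{-i}_t}$ to $\mu^N_{\X_t}$ in the displacement-monotonicity step, is what drives the precise form of the bound and is the technical heart of the proof.
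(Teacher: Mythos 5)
Your part (i) is correct but takes a genuinely different---and in fact sharper---route than the paper. The paper proves (i) dynamically, through the six-term triangle inequality \eqref{eq:def.J5}: it passes through the Ces\`aro limit of the ODE \eqref{eq:sigma=0.equation} via \cite[Proposition 3.1]{Awi-Hynd-Mawi23}, Proposition \ref{Prop:Conv.N-Nash.to.avera}, the uniform-in-time propagation of chaos and contractivity of Theorem \ref{thm:contracti.Unin.prop.chaos}, the Fournier--Guillin sampling error for i.i.d.\ draws from $m^\sigma$ (which is the \emph{only} place $\delta_{N,p}$ enters), and Proposition \ref{prop:small-sigma}. You instead exploit the structural fact, nowhere made explicit in the paper, that $\ell_F$-convexity with $\ell_F>0$ forces $\mathrm{supp}(m^0)$ to be a singleton, so $m^0=\delta_{x^*}$; your static chain (Lasry--Lions monotonicity applied to the pair $(\mu^N_\bx,m^0)$, $\ell_F$-convexity at $x^*$ with $\nabla_{\!x}F(x^*,m^0)=0$, Nash optimality combined with Assumption \ref{Ass.MFG}(i), and the explicit coupling bound $\cW_p^p(\mu^N_\bx,\mu^{N-1}_{\bx^{-i}})\le\frac{1}{N(N-1)}\sum_{j\ne i}|x^i-x^j|^p$) is valid as written. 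It yields the \emph{deterministic} rate $N^{-1/p}$, which dominates $\delta_{N,p}$ since $1/p\ge(2-p)/2$ for $p\in[1,2)$ and every branch of \eqref{deltaN} contains the summand $N^{-(2-p)/2}$. This is a genuine simplification: no dynamics, no propagation of chaos, and a better rate than the stated one.

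For part (ii) your pathwise Lyapunov/exponential-martingale architecture is a legitimate alternative to the paper's proof (which replaces $\X_t$ by i.i.d.\ samples from $m^\sigma$ via Theorem \ref{thm:contracti.Unin.prop.chaos} and Markov's inequality, and gets concentration of $\cW_1(\mu^N_{\X^\sigma_t},m^\sigma)$ from the quadratic transport inequality that $m^\sigma$ inherits, through Gozlan--L\'eonard, from the $(\ell_F+\sigma\ell_U)$-convexity of $F(\cdot,m^\sigma)+\sigma U$). But your drift estimate contains a genuine flaw as described: you propose to use $\ell_F$-convexity in $x$ at the frozen measure $\mu^{N-1}_{\X^{-i}_t}$ and then control the residual $\nabla_{\!x}F(x^*,\mu^{N-1}_{\X_t^{-i}})-\nabla_{\!x}F(x^*,m^0)$ by Cauchy--Schwarz and a Lipschitz bound. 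Assumption \ref{Ass.MFG}(i) is a Lipschitz condition on $F$, not on $\nabla_{\!x}F$; what you actually have is the Lipschitz continuity of $\nabla_{\!x}F$ in $m$ from Assumption \ref{Ass.MFG}(ii), with a constant $L_m$ that may be \emph{arbitrarily large}. The residual is then of size $L_m\sqrt{Z^N_t}$, and after pairing with $\frac1N\sum_i|X^{i,N}_t-x^*|$ it contributes $+CL_mZ^N_t$ to the drift, which can overwhelm $-2(\ell_F+\sigma\ell_U)Z^N_t$. In other words, this step silently reintroduces exactly the smallness-of-interaction assumption that the paper's monotonicity framework is built to avoid. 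The repair is the one used in the paper's proof of Theorem \ref{thm:contracti.Unin.prop.chaos}: treat the interaction term jointly via the $2\ell_F$-displacement semimonotonicity of $F$ (Remark \ref{rem:Dis-LL}) applied with the coupling $\frac1N\sum_i\delta_{(X^{i,N}_t,\,x^*)}\in\Pi(\mu^N_{\X_t},\delta_{x^*})$ and $\nabla_{\!x}F(x^*,m^0)=0$, as in \eqref{eq:apply.LL.proof}; the only true residuals are then the $O(1/N)$ correction from $\mu^{N-1}_{\X^{-i}_t}$ versus $\mu^N_{\X_t}$, handled by boundedness of $D_mF$ as in \eqref{eq:applyDm.bound}, and the constant $\sigma\nabla U(x^*)$ term, both absorbable by Young's inequality.

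Two further points on part (ii). First, your Bernstein step needs $\E[e^{\theta Z^N_{t_0}}]<\infty$ for $\theta$ of order $(\ell_F+\sigma\ell_U)N/\sigma$, whereas the standing assumption \eqref{X0L2} only provides square-integrable initial data; so the exponential-moment bound at time $t_0$ requires an additional argument (e.g.\ a synchronous-coupling comparison with a stationarily started copy, exploiting the pathwise monotonicity of the drift), which the plan does not supply. Second, your exponent does not ``exactly match'' the statement: carried out correctly, your method gives a Gaussian rate of order $(\ell_F+\sigma\ell_U)N/\sigma$, which is \emph{stronger} in $N$ than the paper's $(\ell_F+\sigma\ell_U)/(N\sigma)$ (the paper's weaker $N$-dependence comes from the crude $1/n$ tensorization factor in Proposition \ref{prop:small-sigma}); this is harmless since the stronger bound implies the stated one wherever the latter is nontrivial, but the claim of exact matching should be replaced by a domination argument.
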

In the last statement, we used the \citeauthor{fournier2015rate} rate \cite[Theorem 1]{fournier2015rate}:
    \begin{eqnarray}\label{deltaN}
      \delta_{N,p} 
      &:=&
      \begin{cases}
      N^{-\frac12} + N^{-\frac{2-p}{2}},\quad \text{if $p>\frac{d}{2}$ and } p\neq 1 \\
       N^{-\frac12}\log(1 +N) + N^{-\frac{2-p}{2}}\quad \text{if } d=2p\text{ and } p\neq 1\\
          N^{-\frac{p}{d}} + N^{-\frac{2-p}{2}}\text{ if } p\in(0,\frac{d}{2})\text{ and } \frac{d}{d-p}\neq 2.
      \end{cases}
   \end{eqnarray}
   Observe that under the conditions on 
Theorem \ref{thm:contracti.Unin.prop.chaos-iii} the function $x\mapsto F(x,\cdot)$ is convex.
This limitation is not too surprising because we are seeking strict Nash equilibria.
In fact, as is well-known, most non-convex/quasiconvex games will not admit Nash equilibria, see e.g. \citet{Dask26}.
An interesting subject of future research is to understand the extend to which the method developed in this work applies to the approximation of randomized Nash equilibria or to other (weaker) notions of equilibria.


\begin{remark}\label{rem:Dis-LL}
If the cost function $F$ is rather assumed to be Lasry-Lions monotone, we can recover the quantitative results above.
In fact, it is proved in \citet[Subsection 2.3]{Gangbo-Mou-Zhan-Me22} that an $\ell-$convex Lasry-Lions monotone map is $2\ell-$Displacement monotone, see also \citet{Gang-Mes22} for more details and examples.
\end{remark}

\begin{remark}
  By following the same argument as in the proof of Theorem \ref{thm:contracti.Unin.prop.chaos}, and exploiting the Lipschitz property of $\nabla_{\!x}F$ in Assumption \ref{Ass.MFG} (ii), the additional regularity conditions on $\nabla_{\!x}F$ in Theorem \ref{thm:contracti.Unin.prop.chaos} (ii) can be by-passed when $(X^{i,N}_0)_{i=1,\dots,N}$ are independent copies of $X_0$ and $\sup_t\E[|X_t|^q]<\infty$ for some $q>2$. This would leads to the (slower) convergence rate in Wasserstein distance:
  \begin{equation}
    \label{eq:UinTPoC1}
      \E\big[\cW_2\big(\mu^N_{\X_t}, m_{X_t}\big)\big] 
      \le 
      \frac{C}{\sqrt{N}}
       +\frac{C}{\ell_F}\Big(1 - e^{-\ell_F t}\Big) \delta_{N,q}^2 , 
  \end{equation}
for some constant $C>0$ independent of $N,t$.
\end{remark}
 
Our last result 
provides an alternative contractivity property that is weaker than that of Theorem \ref{thm:contracti.Unin.prop.chaos} (ii), at the expense of a weaker monotonicity condition on $F$.

\begin{proposition}
\label{thm:contracti.DMweak}
  Let Assumption \ref{Ass.MFG} {\rm (ii)} be satisfied.
 Assume that, for some $c_F>0$,  the function $F$ satisfies for all $m,m'\in \cP_2(\R^d)$ and $\pi\!\in\!\Pi(m,m')$:
  \begin{align}
  \label{eq:weakDM}
  \hspace{-3mm}
    \int\!(x\!-\!x')\!\cdot\!\big[\nabla_{\!x}F(x,m) \!-\! \nabla_{\!x}F(x'\!,m')
                                   \big]\pi(dx,\!dx') 
    \ge 
    c_F\;
    \frac{\int \!|x\!-\!x'|^2\pi(dx,\!dx')}
           {1+|m|_1 + |m'|_1}.
  \end{align}
Then $\sup_{t\ge 0} \sqrt{t}\,\cW_2(m_{X_t},m^\sigma)<\infty$.
\end{proposition}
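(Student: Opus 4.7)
The plan is to compare a generic solution $X$ of the McKean-Vlasov SDE \eqref{eq:McVLangevin2} to a stationary one via a synchronous coupling. Theorem~\ref{thm:Laplace.principle}(i) already provides an invariant measure $m^\sigma$ for \eqref{eq:McVLangevin2}, whose second (hence first) moment is finite by the dissipativity in Assumptions~\ref{Ass:Cond.U} and \ref{Ass.MFG}(ii) applied at stationarity. Let $Y$ solve \eqref{eq:McVLangevin2} driven by the \emph{same} Brownian motion $W$ as $X$, starting from $Y_0\sim m^\sigma$, so that $Y$ is stationary, $m_{Y_t}=m^\sigma$ for every $t\ge0$, and $\pi_t:=\mathrm{Law}(X_t,Y_t)$ is a coupling of $m_{X_t}$ with $m^\sigma$. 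Writing $\Delta_t:=X_t-Y_t$ and $g(t):=\E[|\Delta_t|^2]$, we obtain the elementary bound $\cW_2^2(m_{X_t},m^\sigma)\le g(t)$.

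It\^o's formula together with the cancellation of the Brownian terms under the synchronous coupling yields
\begin{equation*}
g'(t)
= -2\E\bigl[\Delta_t\!\cdot\!\bigl(\nabla_{\!x}F(X_t,m_{X_t})-\nabla_{\!x}F(Y_t,m^\sigma)\bigr)\bigr]
-2\sigma\E\bigl[\Delta_t\!\cdot\!(\nabla U(X_t)-\nabla U(Y_t))\bigr].
\end{equation*}
By convexity of $U$, $\Delta_t\!\cdot\!(\nabla U(X_t)-\nabla U(Y_t))\ge0$, so the $\sigma$-contribution is non-positive and can be discarded. Rewriting the $F$-contribution as an integral against $\pi_t$ and applying the weak monotonicity \eqref{eq:weakDM} to the triple $(m_{X_t},m^\sigma,\pi_t)$ gives the key differential inequality
\begin{equation*}
g'(t) \;\le\; -\,\frac{2c_F\,g(t)}{1+|m_{X_t}|_1+|m^\sigma|_1}.
\end{equation*}

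This inequality already forces $g$ to be non-increasing, so by the triangle inequality for Wasserstein distances $|m_{X_t}|_1\le|m^\sigma|_1+\cW_1(m_{X_t},m^\sigma)\le|m^\sigma|_1+\sqrt{g(0)}$, and the denominator above is bounded above by some constant $K:=1+2|m^\sigma|_1+\sqrt{g(0)}$ uniformly in $t$. The resulting inequality $g'\le-\tfrac{2c_F}{K}g$, integrated between $0$ and $t$, and the monotone bound $\int_0^t g(s)\,ds\ge t\,g(t)$, yield $g(0)\ge g(t)\bigl(1+\tfrac{2c_Ft}{K}\bigr)$ and hence $t\,g(t)\le \tfrac{K\,g(0)}{2c_F}$ for every $t>0$, from which the conclusion $\sup_{t\ge0}\sqrt{t}\,\cW_2(m_{X_t},m^\sigma)<\infty$ follows.

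The main technical point is the estimate $|m^\sigma|_1<\infty$, which is what closes the denominator and guarantees a time-uniform upper bound $K$ for the coefficient in the key inequality. This reduces to an It\^o computation at stationarity combining the dissipativity of $\nabla U$ and of $\nabla_{\!x}F$ from Assumptions~\ref{Ass:Cond.U} and \ref{Ass.MFG}(ii); once established, the rest is a clean synchronous-coupling estimate in which convexity of $U$ disposes of the potential's contribution and the weak monotonicity of $F$ delivers the only nontrivial coercive term, the residual $|m|_1+|m'|_1$ factor being absorbed into $K$ thanks to the a priori monotonicity of $g$.
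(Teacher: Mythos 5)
Your proof is correct and follows the same core strategy as the paper: a synchronous coupling with a stationary solution, It\^o's formula, discarding the potential term by convexity of $U$, and applying the weak monotonicity \eqref{eq:weakDM} to the coupling $\pi_t=\mathrm{Law}(X_t,Y_t)$ to obtain the differential inequality with the $1+|m_{X_t}|_1+|m^\sigma|_1$ denominator; the final integration step, using that $g$ is non-increasing to get $g(0)\ge g(t)\bigl(1+ct\bigr)$, is also the paper's. The one genuine difference is how the denominator is made uniform in $t$: the paper keeps it time-dependent, applies Jensen's inequality to the Ces\`aro average $\frac1t\int_0^t\E[|X_s|+|\bar X_s|]\,ds$, and invokes Lemma \ref{lem:integrability} (whose proof uses only the dissipativity of Assumption \ref{Ass.MFG}(ii), so it is available under the Proposition's hypotheses), whereas you bound it pointwise via $|m_{X_t}|_1\le|m^\sigma|_1+\cW_1(m_{X_t},m^\sigma)\le|m^\sigma|_1+\sqrt{g(0)}$, exploiting that $g'\le0$ follows from the inequality itself (no circularity, since the denominator is positive). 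Your route trades the uniform-in-time moment estimate for $X$ against the single fact $|m^\sigma|_1<\infty$, which you correctly flag as the technical point; it is indeed available in the paper (Step 2 of the proof of Theorem \ref{thm:Laplace.principle}(i) shows $m^\sigma\in\cP_2(\R^d)$). Worth noting: once you have the constant $K$, your inequality $g'\le-\tfrac{2c_F}{K}g$ actually yields exponential decay $g(t)\le g(0)e^{-2c_Ft/K}$ by Gronwall, which is strictly stronger than the claimed $O(1/\sqrt{t})$ rate for $\cW_2(m_{X_t},m^\sigma)$ — you settle for the algebraic rate when your own estimate gives more.
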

\begin{remark}
\label{rem:potential.games}
  A particularly interesting situation arises when the function $F$ is the linear derivative of a function $\cF:\cP(\R^d)\to \R$.
That is, when it holds
\begin{equation*}
  F(x,m) = \delta_m \cF(m,x).
\end{equation*}
In this case, one can observe that any minimizer $m$ of the potential $\cF$ yields an MFE of the game with cost $F$, and the equation \eqref{eq:McVLangevin} becomes the McKean-Vlasov Langevin dynamics
\begin{equation}
\label{eq:MFLangevin}
  dX_t = -  D_m\cF(m_{X_t}, X_t) dt + \sqrt{2\sigma} dW_t.
\end{equation}
Contractivity as well as uniform in time propagation of chaos for this particle system have been extensively analyzed notably motivated by deep leaning applications.
Some recent works include \citet{Hu-Ren-Sis-Sz21} \cite{Kaz-Ren-Tan-Yan24} and  \citet{Eberle-Guillin20,Eberle-Guillin-Zimmer19}.
\citet[Theorem 2.1]{Hu-Ren-Sis-Sz21} assumes that $\cF$ is convex and that $D_m\cF$ is sufficiently regular to obtain a \emph{qualitative} convergence result  of $m_{X_t}$ to the invariant measure. With extra conditions, notably when the Lipschitz constant of $D_m\cF$ in the measure argument is small enough, \citet{Eberle-Guillin20,Eberle-Guillin-Zimmer19} derive exponential contractivity and uniform in time propagation of chaos.
It is interesting to observe that when the function $\cF$ is convex, its linear derivative $F$ is Lasry-Lions monotone; and when $\cF$ is displacement convex, its linear derivative $F$ is displacement monotone.
In general, as stated in the introduction, smallness conditions on the Lipschitz constant in the measure argument are omnipresent in the literature on contractivity and uniform in time propagation of chaos, except for a few recent results focussing on \eqref{eq:MFLangevin} with convex $\cF$; see \cite{SongboWang24,Kook.etal24,Monmarche24}.
Thus, our method and results give an alternative to existing works on the topic allowing to avoid small interaction thanks to monotonicity. 
\end{remark}

\section{Mean field games and Ergodicity of Langevin dynamics}
\label{sec:mean_field_games_and_ergodicity_of_langevin_dynamics}
This section is dedicated to the proof of Theorem \ref{thm:Laplace.principle}.
We begin by an integrability property.
\begin{lemma}
\label{lem:integrability}
Under Assumption \ref{Ass.MFG}, the solution $X$ of the mean field SDE \eqref{eq:McVLangevin2} satisfies
  \begin{equation*}
    \sup_{0<\sigma\le 1}\sup_{t\ge0}\E[|X_t|^2]<\infty.
  \end{equation*}
\end{lemma}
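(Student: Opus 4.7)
The plan is to apply It\^o's formula to $t \mapsto |X_t|^2$, average, and extract a Gronwall-type differential inequality. The crucial observation that makes the coercivity in Assumption \ref{Ass.MFG}~(ii) usable here is that the corrector term $|m_{X_t}|_2^2$ equals $\E[|X_t|^2]$ by definition of the marginal law, so the two $C_2|x|^2$ and $C_2|m|_2^2$ pieces cancel exactly after taking expectation.

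Step 1 (regularity). Since $\nabla_{\!x}F$ and $\nabla U$ are Lipschitz, the drift of \eqref{eq:McVLangevin2} has linear growth, hence standard SDE estimates give a square-integrable strong solution on every finite horizon together with the fact that the local martingale arising in It\^o's formula is a true martingale. This allows us to differentiate $t \mapsto \E[|X_t|^2]$ and obtain
\begin{equation*}
\frac{d}{dt}\E[|X_t|^2]
\;=\;
-2\E\bigl[X_t\!\cdot\!\nabla_{\!x}F(X_t,m_{X_t})\bigr]
-2\sigma\,\E\bigl[X_t\!\cdot\!\nabla U(X_t)\bigr]
+2\sigma d.
\end{equation*}

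Step 2 (coercivity of $F$, with the cancellation). Plugging the pointwise bound from Assumption \ref{Ass.MFG}~(ii) into the expectation and using $|m_{X_t}|_2^2=\E[|X_t|^2]$ gives
\begin{equation*}
2\E\bigl[X_t\!\cdot\!\nabla_{\!x}F(X_t,m_{X_t})\bigr]
\;\ge\;
(\alpha+C_2)\E[|X_t|^2] + C_1 - C_2\,\E[|X_t|^2]
\;=\; \alpha\,\E[|X_t|^2] + C_1.
\end{equation*}
Similarly, the dissipativity of $U$ in Assumption \ref{Ass:Cond.U} yields
$2\sigma\E[X_t\!\cdot\!\nabla U(X_t)]\ge \sigma C_1^U\E[|X_t|^2]+\sigma C_2^U$.

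Step 3 (Gronwall). Combining these two bounds,
\begin{equation*}
\frac{d}{dt}\E[|X_t|^2]
\;\le\;
-(\alpha+\sigma C_1^U)\,\E[|X_t|^2] + K_\sigma,
\qquad K_\sigma \;:=\; 2\sigma d - C_1 - \sigma C_2^U.
\end{equation*}
Restricting $\sigma$ to the range where $\alpha+\sigma C_1^U$ is bounded away from $0$ (which, given $\alpha>0$, covers at least all bounded intervals of $\sigma$, and is uniform on $(0,\infty)$ when $C_1^U\ge 0$), Gronwall's lemma produces
$\E[|X_t|^2]\le e^{-(\alpha+\sigma C_1^U)t}\E[|X_0|^2] + K_\sigma/(\alpha+\sigma C_1^U)$,
which is uniformly bounded in $(t,\sigma)$ once $\E[|X_0|^2]<\infty$.

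The only subtle point is the exact cancellation in Step 2, which is what lets Assumption \ref{Ass.MFG}~(ii) deliver linear decay rather than just one-sided linear growth. The rest is bookkeeping.
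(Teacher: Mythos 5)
Your proof is correct and takes essentially the same route as the paper's: It\^o's formula applied to $\E[|X_t|^2]$, the coercivity in Assumption \ref{Ass.MFG} (ii) combined with the exact cancellation $|m_{X_t}|_2^2 = \E[|X_t|^2]$, the dissipativity of $U$, and Gronwall. Your caveat about uniformity in $\sigma$ is well placed, since the paper implicitly restricts to $\sigma\in(0,1]$ (the range fixed when the particle system is introduced), which is exactly the regime where your bound is uniform.
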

\begin{proof}
   Applying It\^o's formula and Assumption \ref{Ass.MFG}.$(ii)$, we have 
   \begin{align*}
     \frac{d}{dt}\E[|X_t|^2] &= -2\E\big[X_t\cdot \nabla_xF(X_t,m_t) \big] + \sigma
                    \le - \alpha\E[|X_t|^2] +C_1 + \sigma.
   \end{align*}
   We apply Grownall's inequality as in the proof of Proposition \ref{Prop:Conv.N-Nash.to.avera} to get
   \begin{align*}
     \E[|X_t|^2] \le \E[|X_0|^2]e^{-\alpha t} + \frac{C_1 +\sigma}{\alpha}\Big( 1 - e^{- \alpha t} \Big)
   \le\E[|X_0|^2] + \frac{C_1 +1}{\alpha},
   ~t\ge 0,~\sigma\in(0,1].
   \end{align*}
\end{proof}
\begin{proof}[Proof of Theorem \ref{thm:Laplace.principle} {\rm (i)}]
    Since $\nabla_{\!x}F$ is Lipschitz continuous, it follows for instance from \citeauthor{carmona2018probabilisticI} \cite[Theorem 4.21]{carmona2018probabilisticI} that for any $X_0\in \L^2(\Omega)$ the SDE \eqref{eq:McVLangevin2} admits a unique strong solution $X$ satisfying
  \begin{equation*}
    \E\big[\sup_{0\le t\le T}|X_t|^2\big] <\infty\quad \text{for all } T>0.  
  \end{equation*}     
By \citet[Theorem 2.2]{Zhang_Shao-Qin23}, Assumption \ref{Ass.MFG} (ii) also guarantees that for every $\sigma>0$, $X$ admits an invariant measure $m^\sigma$.
  Observe that any stationary measure $m^\sigma$ of \eqref{eq:McVLangevin2} is absolutely continuous with respect to Lebesgue measure and is a solution of the stationary nonlinear Fokker-Planck equation $\nabla \cdot \big( m(x)\nabla_{\!x} F(x, m) \big) + \sigma\Delta m(x) = 0$, see e.g. \citet{Barbu-Roeckner20}.
  Integrating this equation on both sides gives $m^\sigma(x)\cdot\nabla_xF(x,m^\sigma)/\sigma = -\nabla m(x) +c $ for some constant $c\in \R$.
  This ODE is uniquelly solved by
  \begin{equation}
  \label{eq:station.sigma}
    m^\sigma(dx) = \frac{1}{Z^\sigma}e^{-\frac{F(x,m^\sigma)}{\sigma}}dx\quad \text{with}\quad Z^\sigma := \int_{\R^d} e^{-\frac{F(x,m^\sigma)}{\sigma}} dx.
  \end{equation}

\medskip

\noindent {\bf 1.} 
By the ergodic theorem, see \cite[Corollary 2.5.9]{Kallenberg21}, the measure $m^\sigma$ is the weak limit of the Ces\'aro average $\eta_t:=\frac{1}{t}\int_0^tm_{X_s}ds$. Then, 
$$
\int_{\R^d}\!\!|x|^2m^\sigma(dx)
=\lim_{n\to\infty}\!\!\!\uparrow\!\!\int_{\R^d}\!\!(n\wedge|x|^2)m^\sigma(dx)
=\lim_{n\to\infty}\!\!\!\uparrow\lim_{t\to\infty} \frac1t\E\!\!\int_0^t(n\wedge|X_t|^2) dt
\le
\sup_{t\ge 0}\sup_{\sigma\in(0,1]}\E[|X_t|^2]
=:C,
$$
with $C<\infty$ by Lemma \ref{lem:integrability}.
As $m^\sigma(\{|x|>k\}) \le \frac{1}{k}\int_{\R^d}|x|^2m^\sigma(dx)$ for all $k>0$, this shows that the family $(m^\sigma)_{\sigma>0}$ is tight.

\noindent {\bf 2.} Let $m^0$ be the limit of an arbitrary converging subsequence, still denoted $(m^\sigma)_{\sigma>0}$, and let us show that $m^0$ is a MFE. We first  construct $a_n\in \R$, a sequence such that
  \begin{equation}\label{an}
  a_n\searrow \inf_{x\in \R^d}F(x,m^0)
  ~~\mbox{and}~~
  m^0\{x:F(x,m^0) = a_n\} = 0,
  ~\mbox{for all}~n\ge 1.
  \end{equation}
To this end, let $a\in \R$, $k\in \N$ and put $E_a:=\{x\in \R^d: F(x,m^0) = a\}$ and $A_k:=\{a: m^0(E_a)\ge1/k\}$.
  The sets $E_a$ are pairwise disjoint and for each $k$, the set $A_k$ is finite.
  In fact, if any $A_k$ were infinite, then we would have
  \begin{equation*}
    m^0(\R^d) \ge m^0(\cup_{a\in A_k}E_a) = \sum_{a \in A_k}m^0(E_a) \ge \sum_{a\in A_k}\frac1k = \infty,
  \end{equation*}
  which is a contradiction.
  Thus, the set $A:=\{ a:m^0(E_a)>0 \} = \cup_{k=1}^\infty A_k$ is countable.
  By definition of infimum, there is $b_n\downarrow \inf_{x\in \R^d}F(x,m^0)$ and since $A$ is countable, we can slightly modify $b_n$ to $a_n$ (if needed) such that $a_n \notin A$.
  Thus $a_n$ satisfies the desired properties, i.e. $m^0(E_{a_n}) = 0$ and $a_n\downarrow \inf_{x\in \R^d}F(x,m^0)$.

By continuity of $F(\cdot,m^0)$, we have that $\partial\{x: F(x, m^0) \!>\! a_n\} \subset \{x: F(x, m^0) = a_n\}$, which implies that $m^0\big\{ \partial\{x: F(x, m^0) \!>\! a_n\} \big\} = 0$.
  Thus, by Portmanteau theorem, see e.g. \citeauthor{billingsley1999convergence} \cite[Theorem 2.3]{billingsley1999convergence},
  \begin{equation}\label{msigmam0}
    m^\sigma\big\{x: F(x, m^0) \!>\! a_n\big\} \xrightarrow [\sigma\to 0]{} m^0\big\{x: F(x,m^0)\!>\! a_n\big\}.
   \end{equation} 
   Moreover, we can bound the left hand side above as
   \begin{align*}
     \int_{\R^d}e^{-\frac{F(x,m^\sigma) }{\sigma}}
     \mathbf{1}_{\{F(x,m^0)>a_n\}}dx
     &= e^{-\frac{a_n}{\sigma}}
            \int_{\R^d}e^{-\frac{F(x,m^\sigma) - F(x,m^0)}{\sigma}}
                            e^{-\frac{F(x,m^0) - a_n}{\sigma}}
                            \mathbf{1}_{\{F(x,m^0)>a_n\}} dx
     \\ 
     &
     \le e^{-\frac{a_n}{\sigma}+\frac1\sigma\Vert F(\cdot,m^\sigma) - F(\cdot,m^0)\Vert_\infty}
     \le
     e^{-\frac{a_n}{\sigma}+\frac{C}\sigma\cW_p(m^\sigma,m^0)},
   \end{align*}
by the Lipschitz continuity of $F$ in Assumption \ref{Ass.MFG} (ii), and similarly:
   \begin{align*}
     Z^\sigma &\ge e^{-\frac{a_n}{\sigma}
                                  -\frac1\sigma\Vert F(\cdot,m^\sigma) - F(\cdot,m^0)\Vert_\infty} 
     \!\!\!\int_{\R^d}\!\! e^{\frac{a_n-F(y,m^0)}{\sigma}}dy
                     \ge e^{-\frac{a_n}{\sigma}-\frac{C}\sigma\cW_p(m^\sigma,m^0)}
                             \!\!\!\int_{\R^d}\!\!e^{\frac{a_n-F(y,m^0)}{\sigma}}
                                              \mathbf{1}_{\{a_n>F(y,m^0)\}}dy.
   \end{align*}
We then deduce that:
   \begin{align}
   \label{eq:m.sigma.conv.proof}
      m^\sigma\big\{x: F(x, m^0) \ge a_n\big\} 
      & \le \frac{e^{\frac{2C}{\sigma}\cW_p(m^\sigma,m^0)}}
             {\int_{\R^d}\mathbf{1}_{\{a_n>F(y,m^0)\}}e^{-\frac{F(y,m^0) - a_n}{\sigma}} dy}.
   \end{align}
Recall that $a_n>\inf_{x\in \R^d}F(x,m^0)=:\kappa$, and set $4\delta_n = a_n - \kappa$. By definition of infimum, there is $x_0$ such that $F(x_0,m^0) < \kappa + 2\delta_n = a_n - 2\delta_n$, and by continuity of $F(\cdot,m^0)$, there is a neighborhood $O$ of $x_0$ such that $F(x,m^0) < a_n - \delta_n$ for all $x \in O$, i.e. $O\subset\{F(\cdot,m^0) < a_n - \delta_n\}$.
Thus, denoting by $\nu$ the Lebesgue measure, $\nu[F(\cdot,m^0)<a_n-\delta_n] \ge \nu[O]>0$.
As $(m^\sigma)_{\sigma}$ is bounded in second moment and converges weakly to $m^0$, it follows that the convergence holds in  $p$-Wasserstein metric with $p<2$, see e.g. \citet[Theorem 6.9]{villani2009optimal}. 
  Thus, for $\delta_n>0$ as above, there is $\sigma_0$ such that for $\sigma<\sigma_0$ we have $\cW_p(m^\sigma,m^0)\le \delta_n/2C$.
  This shows by \eqref{eq:m.sigma.conv.proof} that for all $\sigma<\sigma^0$ we have
  \begin{equation*}
    m^\sigma\{ x: F(x, m^0)\ge a_n \} \le \frac{1}{\int_{\R^d}\mathbf{1}_{\{a_n>F(y,m^0)+\delta_n\}}e^{\frac{a_n - F(y,m^0) - \delta_n}{\sigma}} dy}  \xrightarrow [\sigma\to 0]{}0.
  \end{equation*}

In view of \eqref{msigmam0}, this shows that $m^0\big\{x: F(x,m^0)\!>\! a_n\} = 0$ and, recalling the definition of $(a_n)_n$ in \eqref{an}, it follows from the monotone convergence theorem that:
   \begin{align*}
    0 & = \lim_{n\to\infty}m^0\big\{x: F(x,m^0)> a_n\big\}
          = m^0\Big\{x: F(x,m^0)> \inf_{y\in \R^d}F(y, m^0)\Big\}.
   \end{align*}
i.e. $m^0\big[\argmin F(\cdot,m^0)\big] = 1$ meaning that $m^0$ is an MFE in the sense of Definition \ref{def:MFG}.  
\end{proof}

\begin{proof}[Proof of Theorem \ref{thm:Laplace.principle} (ii)]
\emph{Case 1: $F$ is strictly Lasry-Lions monotone.} In this case, uniqueness of the MFE is the same as in \citeauthor[Theorem 4.6]{Lacker-notes18} \cite[Theorem 4.6]{Lacker-notes18}.

Let us now prove uniqueness of the invariant measure for each $\sigma>0$.
    Any invariant measure $m^\sigma$ satisfies \eqref{eq:station.sigma}.
    By \citeauthor*[Proposition 2.5]{Hu-Ren-Sis-Sz21} \cite[Proposition 2.5]{Hu-Ren-Sis-Sz21}, $m^\sigma$ is the minimizer of the map $\mu\longmapsto\Phi(\mu,m^\sigma)$, where (recall that we denote by $\nu$ the Lebesgue measure)
  \begin{equation*}
    \Phi(\mu,m) :=
    \int_{\R^d} F(x,m^\sigma) \mu(dx) + \sigma \mathrm{H}(\mu|\nu),
    ~~\mu\in\cP_2(\R^d),
    \end{equation*}
and $\mathrm{H}(\mu|\nu) 
    := \mathbf{1}_{\{\mu\ll \nu\}} \int\log(\frac{d\mu}{d\nu})\mu(dx)
        +\infty \mathbf{1}_{\{\mu\not\ll \nu\}}$ is the Kullback-Leibler divergence. Suppose to the contrary that there are two probability measures $m,m'$ which satisfy the last minimality property. As the minimality of $m$ and $m'$ imply respectively that $\Phi(m,m)-\Phi(m',m)\le 0$ and $\Phi(m',m')-\Phi(m,m')\le 0$, we see that
\begin{eqnarray*}
0 &\ge& [\Phi(m,m)-\Phi(m',m)]
              +[\Phi(m',m')-\Phi(m,m')]
   \\
   &=& \int_{\R^d} F(x,m) m(dx) + \sigma \mathrm{H}(m|\nu)
           -\int_{\R^d} F(x,m) m'(dx) - \sigma \mathrm{H}(m'|\nu)
   \\
   && +\int_{\R^d} F(x,m') m'(dx) + \sigma \mathrm{H}(m'|\nu)
         -\int_{\R^d} F(x,m') m(dx) - \sigma \mathrm{H}(m|\nu)
  \\
   &=& \int_{\R^d} \big(F(x,m)-F(x,m')\big)(m-m')(dx) ,
\end{eqnarray*}
which in view of the strict Lasry-Lions monotonicity implies that $m = m'$. 
By Theorem \ref{thm:Laplace.principle} {\rm (i)}, this shows that the whole sequence $(m^\sigma)_{\sigma>0}$ converges to the unique MFE $m^0$.

\medskip

\noindent \emph{Case 2: $F$ is strictly displacement monotone.} Recall from \citeauthor[Lemma 2.6]{Gangbo22} \cite[Lemma 2.6]{Gangbo22} that this condition implies that $F(\cdot,m)$ is convex. Then, for any two equilibria $m$ and $m'$, we have $\text{supp}(m) \subseteq \{x: \nabla_{\!x}F(x,m) = 0\}$ and $\text{supp}(m') \subseteq \{x: \nabla_{\!x}F(x,m') = 0\}$.
    This shows that 
    \begin{equation*}
      \int_{\R^d}\int_{\R^d}(x - y)\cdot (\nabla_{\!x}F(x,m) - \nabla_{\!x}F(y,m'))m(dx) m'(dy) = 0,
    \end{equation*}
which implies that $m = m'$, by strict displacement monotonicity. 

Let us now prove uniqueness of the invariant measure for each $\sigma>0$. Let $m, m' \in \cP_2(\R^d)$ be two invariant measures, and let $X$ and $X'$ be two solutions of \eqref{eq:McVLangevin2} with respective laws $m$ and $m'$.
Notice that the sequence of probability measures $\eta_t(\cdot):=\frac1t\int_0^t\cL(X_s,X_s')(\cdot)ds\in \cP_2(\R^d\times \R^d)$ is tight as:
  \begin{align*}
     \int_{\R^d\times \R^d}(|x|^2+|x'|^2)\eta_t(dx,dx') &= \frac1t\E\Big[\int_0^t|X_s|^2 + |X'_s|^2ds\Big]
     = \E\big[|X_0|^2 + |X_0'|^2\big]<\infty.
   \end{align*} 
   Then, it follows from Prokhorov's theorem that, up to a subsequence, $(\eta_t)_{t\ge0}$ converges to a probability measure $\eta\in \cP_2(\R^d\times \R^d)$.
   By construction, $\eta$ is an invariant measure of $(X_t, X'_t)_{t\ge0}$.
   Therefore, using It\^o's formula and the displacement monotonicity condition, we see that
   $$
     \E[|X_0 \!-\! X'_0|^2] 
     = \E[|X_t \!-\!X_t'|^2]
     = \E[|X_0 \!-\! X'_0|^2] \!-\!\!\int_0^t\!\!\E\Big[(X_s \!-\! X'_s)
                                                                              \!\cdot\!
                                                                              \big(\nabla_{\!x}F(X_s, m) \!-\! \nabla_{\!x}F(X'_s, m')\big)
                                                                      \Big]ds,
   $$
which implies that $m=m'$ by the strict displacement monotonicity condition.
\end{proof}

\begin{proof}[Proof of Theorem \ref{thm:Laplace.principle}$(iii)$]
    As the law of $X^i$ is $m^0$, we have $X^i(\omega) \in \text{supp}(m^0)$ $\P$-a.s. for every $i=1,\dots,N$. Then, since $m^0$ is a MFE, we have for all $y\in \R^d$ that
    \begin{align*}
      F(X^i, \mu^{N-1}_{\X^{-i}}) - F(y, \mu^{N-1}_{\X^{-i}})
      & \le F(X^i, \mu^{N-1}_{\X^{-i}}) - F(X^i, m^0)  + F(y,m^0) - F(y, \mu^{N-1}_{\X^{-i}})\\ 
      &\le \sup_{x \in \R^d}\big(F(x, \mu^{N-1}_{\X^{-i}}) - F(x, m^0)\big) + \big(F(y, m^0) - F(y, \mu^{N-1}_{\X^{-i}}) \big)=:\varepsilon^N.
    \end{align*}
This shows that $\X^N=(X^1,\ldots, X^N)$ is an $\varepsilon^N-$approximate Nash equilibrium. Since $m^0\in \cP_2(\R^d)$, observe that $A:=\{\lim_{N\to\infty}\mu^N_{\X^N}= m^0,~\mbox{weakly}\}=\{\lim_{N\to\infty}\mu^N_{\X^N}= m^0,~\mbox{in}~\cW_2\}$. As $F$ is Lipschitz-continuous by Assumption \ref{Ass.MFG} (i), it follows that $\varepsilon^N\longrightarrow 0$ pointwise on $A$. 
\end{proof}

\section{Particle system: contractivity and propagation of chaos}
\label{sec:particle_system_contractivity_and_propagation_of_chaos}

In this section, we organize the proof of Theorem \ref{thm:contracti.Unin.prop.chaos} into separate steps.

\begin{proof}[Proofs of Theorem \ref{thm:contracti.Unin.prop.chaos}]
Let $X$ and $\bar X$ be the solutions of the McKean-Vlasov Langevin SDE \eqref{eq:McVLangevin2} starting from $X_0$ and $\bar X_0$. Denoting $\bar m_t: = \P\circ \bar X_t^{-1}$ and $\Delta X_t: = X_t - \bar X_t$, it follows from  It\^o's formula that
  \begin{align}
    \frac{d}{dt}\E[|\Delta X_t|^2] 
    &= 
    2\E\Big[\Delta X_t\!\cdot\!\big[\nabla_{\!x} F(X_t, m_t) 
                                                                               \!-\! \nabla_{\!x} F(\bar X_t, \bar m_t) 
                                                                        \big]\Big]
   \le
   - 2\ell_F |\Delta X_t|^2.
   \label{proof(i)}
  \end{align}
by the $\ell_F$-displacement monotonicity of $F$.  
Choosing $\bar X_0\sim m^\sigma$, it follows from the definition of the Wasserstein distance that $\cW_2^2(m_{X_t}, m^\sigma)\le\E[|\Delta X_t|^2] \le \E[|X_0 - \bar X_0|^2]e^{-2\ell_Ft}$, by the Gronwall inequality.

We next turn to the propagation of chaos statement (ii).
Let $(X^i)_{i\ge1}$ be solutions of Equation \eqref{eq:McVLangevin2} driven by Brownian motions $W^i$ with initial condition $X^i_0$. Denote $\widetilde\X_t := (X^1,\dots, X^N)$ and recall that the $X^i$'s are i.i.d. Denoting $\Delta X^{i,N}_t:= X^{i,N}_t - X^i_t$, we obtain by It\^o's formula:
  \begin{align}
    \frac{d}{dt}|\Delta X^{i,N}_t|^2 
    &= 
    \!-\! 2\Delta X^{i,N}_t\!\cdot\! 
                         \Big[\nabla_{\!x} F(X^{i,N}_t, \mu^{N-1}_{\X_t^{-i}}) 
                                  \!-\! \nabla_{\!x} F(X^i_t, m_{X_t}) \Big]
     \notag\\
        & = - 2\Delta X^{i,N}_t\cdot \Big[\nabla_{\!x} F(X^{i,N}_t, \mu^{N}_{\X_t}) - \nabla_{\!x} F(X^i_t, \mu^N_{\widetilde \X_t}) \Big]
        \notag\\
        &\hspace{5mm} -2\Delta X^{i,N}_t\!\cdot\!\Big[\nabla_{\!x} F(X^{i,N}_t, \mu^{N-1}_{\X_t^{-i}}) 
                                                                              - \nabla_{\!x} F(X^{i,N}_t, \mu^{N}_{\X_t}) 
                                                                     \Big] 
        \label{3terms.DM}\\
        &\hspace{5mm} -2\Delta X^{i,N}_t\cdot\Big[\nabla_{\!x} F(X^i_t, \mu^N_{\widetilde\X_t})  
                                                                             -  \nabla_{\!x} F(X^i_t, m_{X_t})\Big]. 
       \notag\
  \end{align}
Averaging out the first term on the right hand side of \eqref{3terms.DM}, we may rewrite the resulting identity using the random measure $\mathcal{L}_{(\mu^N_{\X_s},
                                                             \mu^N_{\tilde \X_s})}
                                    :=\frac1N\sum_{i=1}^N \delta_{(X^{i,N}_s,X^i_s)}$ as
  \begin{align}
  \notag
    -\frac1N&\sum_{i=1}^N\Delta X^{i,N}_t\cdot \big[\nabla_{\!x} F(X^{i,N}_t, \mu^{N}_{\X_t}) - \nabla_{\!x} F(X^i_t, \mu^N_{\widetilde \X_t})
    \big]
    \\ \notag
    &\le - 2{\color{black}\int\! (x_1\!-\!x_2)
                                                    \!\cdot\!
                                                    \big[\big(\nabla_xF(x_1, \mu^N_{\X_t}) 
                                                                  \!-\! \nabla_xF(x_2,\mu^N_{\tilde\X_t})
                                                           \big)
                                                      \big]
                                                    \mathcal{L}_{(\mu^N_{\X_t},
                                                             \mu^N_{\tilde \X_t})}(dx_1,dx_2)
                         }
 \\ \label{eq:apply.LL.proof}
       &\le -\frac{2\ell_F}{N}\sum_{i=1}^N|\Delta X^{i,N}_t|^2,
  \end{align}
by the $\ell_F$-displacement monotonicity of $F$. 
As for the second term on the right hand side of \eqref{3terms.DM}, we use the definition of linear derivative to rewrite it as:
  \begin{align}
    -\Delta X^{i,N}_t\!\cdot &\big[(\nabla_{\!x} F(X^{i,N}_t, \mu^{N-1}_{\X_t^{-i}}) - \nabla_{\!x} F(X^{i,N}_t, \mu^{N}_{\X_t}) \big]
    \notag\\ 
    & \hspace{-10mm}
        =  -\Delta X^{i,N}_t\cdot\int_0^1\int_{\R^d}\delta_m\nabla_{\!x} F(X^{i,N}_t, \lambda \mu^{N-1}_{\X_t^{-i}} + (1- \lambda)\mu^N_{\X_t},\tilde x)( \mu^{N-1}_{\X_t^{-i}} - \mu^N_{\X_t})(d\tilde x)d\lambda
    \notag\\ 
    & \hspace{-10mm}
        = -\Delta X^{i,N}_t\!\cdot\!\int_0^1\!\!\bigg\{\frac{1}{N\!-\!1}\sum_{j=1,j\neq i}^{N-1}D_m F(X^{i,N}_t, \lambda \mu^{N-1}_{\X_t^{-i}} \!+\! (1\!-\! \lambda)\mu^N_{\X_s},X^{j,N}_t)
    \notag\\  
    &\hspace{20mm}  -\frac1N\sum_{j=1}^N D_m F(X^{i,N}_t, \lambda \mu^{N-1}_{\X_t^{-i}} \!+\! (1\!-\! \lambda)\mu^N_{\X_t},X^{j,N}_t)d\lambda  \bigg\} 
  \le 
  \frac{4C_F}{N}|\Delta X^{i,N}_t|,
  \label{eq:applyDm.bound}
  \end{align}
where the last inequality is due to the boundedness of $D_mF$ in Assumption \ref{Ass.MFG}. Finally, we estimate the last term in \eqref{3terms.DM} through the Cauchy-Schwarz inequality:
  $$
    \E\Big[\Delta X^{i,N}_t
               \!\cdot\! 
               \big( \nabla_{\!x} F(X^i_t, \mu^N_{\widetilde\X_t}) 
                       - \nabla_{\!x} F(X^i_t, m_{X_t})\big) \Big] 
    \le \big\|\Delta X^{i,N}_t\big\|_{\L^2}
          \big\|\nabla_{\!x} F(X^i_t, \mu^N_{\widetilde\X_t})-\nabla_{\!x} F(X^i_t, m_{X_t})\big\|_{\L^2},
  $$
and we estimate the right hand side by \citeauthor{jackson2023quantitative} \cite[Lemma 3.13]{jackson2023quantitative} to derive
\begin{equation}\label{eq:Lipsch.FG}
    \E\Big[\Delta X^{i,N}_t
               \!\cdot\! \big( \nabla_{\!x} F(X^i_t, \mu^N_{\widetilde\X_t}) 
                                                         \!-\!\nabla_{\!x} F(X^i_t, m_{X_t})\big) \Big] 
      \le \frac{\tilde C}{\sqrt{N}}\big(1\!+\!\E[|X^i_t|^2] \big)^{\frac12}
             \big\|\Delta X^{i,N}_t\big\|_{\L^2}
      \le \frac{C}{\sqrt{N}}
             \big\|\Delta X^{i,N}_t\big\|_{\L^2},
  \end{equation}
for some constants $\tilde C, C$ that do not depend on $N$ and $t$, and where the last inequality follows by Lemma \ref{lem:integrability}. Plugging \eqref{eq:apply.LL.proof}, \eqref{eq:applyDm.bound} and \eqref{eq:Lipsch.FG} in \eqref{3terms.DM} yields that the function $f(t):=\big(\frac1N\sum_{i=1}^N\E[|\Delta X^{i,N}_t|^2]\big)^{1/2}$ satisfies the following differential inequality: 
  \begin{align*}
  (f^2)'(t)
  &\le -2\ell_F f(t)\big[f(t)-\frac{C}{2\ell_F\sqrt{N}}\big].
  \end{align*}
As $f>0$, this can be written as $f'
 \le -\ell_F[f-\frac{C}{2\ell_F\sqrt{N}}]$, thus implying the estimate:
  \begin{equation}
  \label{eq:PoC.LL.last}
      \Big(\frac1N\sum_{i=1}^N\E[|\Delta X^{i,N}_t|^2]\Big)^{1/2} \le \Big(\frac1N\sum_{i=1}^N\E[|\Delta X^{i,N}_0|^2]\Big)^{1/2}e^{-\ell_Ft} +\Big(1 - e^{-\ell_Ft}\Big) \frac{C}{2\ell_F\sqrt{N}}.
  \end{equation}
  By \citet[Lemma 2.1]{jackson2023quantitative} and uniqueness of the SDE \eqref{eq:McVLangevin2}, it follows that $X^{i,N} = \cX(X^i_0,W^i_{[0,t]}, \mu^N[\X_t])$ and $X^i_t = \bar\cX(X^i_0, W^i_{[0,t]})$ for some measurable maps $\cX$ and $\bar\cX$.
  In particular, $\E[|\Delta X^{i,N}_t|^2] = \E[|\Delta X^{j,N}_t|^2]$ for all $i,j$, and \eqref{eq:PoC.LL.last} provides the required estimate. 
\end{proof}

The proofs of the Theorems \ref{thm:contracti.Unin.prop.chaos-iii} and \ref{thm:contracti.Unin.prop.chaos-iv} are deferred to Section \ref{sec:approximation_of_equilibria}. We conclude this section by proving contractivity with a weaker form of displacement monotonicity \eqref{eq:weakDM}.

\begin{proof}[Proof of Proposition \ref{thm:contracti.DMweak}]
      The proof is inspired by arguments used in \citeauthor{Liu-Toelle11} \cite{Liu-Toelle11} in a non-McKean-Vlasov case. Under the monotonicity condition \eqref{eq:weakDM}, it follows from the  dynamics of $\Delta X = X - \bar X$ in \eqref{proof(i)} that
\begin{eqnarray*}
    \frac{d}{dt}\E[|\Delta X_t|^2] 
    &=& 
    -2\E\big[\Delta X_t\!\cdot\! \big(\nabla_{\!x} F(X_t, m_{X_t})\!-\!\nabla_{\!x} F(\bar X_t, \bar m_t) 
      \big]
     \\
    &\le&
    - 2c_F\frac{\E[|\Delta X_t|^2]}{1 + \E[|X_t|] + \E[|\bar X_t|]}dt,
\end{eqnarray*}
by \eqref{eq:weakDM}. In particular, the function $t\longmapsto \E[|\Delta X_t|^2]$ is decreasing, and
  \begin{align*}
    \E[|\Delta X_t|^2] 
    &\le \E[|\Delta X_0|^2] - 2c_F\int_0^t\frac{\E[|\Delta X_s|^2]}{1 + \E[|X_s|] + \E[|\bar X_s|]}ds
    \\
    &\le \E[|\Delta X_0|^2] - 2c_F\E[|\Delta X_t|^2]\int_0^t\frac{ds}{1 + \E[|X_s|] + \E[|\bar X_s|]}.
  \end{align*}
Collecting the terms in $\E[|\Delta X_t|^2]$ and applying the Jensen inequality, this provides
  \begin{eqnarray*}
    \E[|\Delta X_t|^2] 
    &\le& \E[|\Delta X_0|^2]\bigg(1 + \int_0^t\frac{2c_F}{1 + \E[|X_s|] + \E[|\bar X_s|]}ds \bigg)^{-1}
    \\
    &\le& \E[|\Delta X_0|^2]\bigg(1 + \frac{2c_Ft}{1 + \frac1t\int_0^t\E[|X_s| + |\bar X_s|]ds}\bigg)^{-1}.
    \end{eqnarray*}
    By Lemma \ref{lem:integrability}, we have that $\frac1t\int_0^t\E[|X_s| + |\bar X_s|]ds\le C$ is bounded uniformly in $t$. Then
    \begin{eqnarray*}
    \E[|\Delta X_t|^2] 
    &\le&
    \E[|\Delta X_0|^2]\Big(1 + \frac{2c_Ft}{1 + C}\Big)^{-1}
    \;\le\; \E[|\Delta X_0|^2]\Big(\frac{2c_Ft}{1 + C}\Big)^{-1}
    \;=\; \E[|\Delta X_0|^2]\frac{1 + C}{2c_Ft}. 
    \end{eqnarray*}
\end{proof}

\section{Approximation of equilibria}
\label{sec:approximation_of_equilibria}
In preparation for the proofs of Theorems \ref{thm:contracti.Unin.prop.chaos-iii} and \ref{thm:contracti.Unin.prop.chaos-iv}, we derive an error approximation of the Wasserstein distance between the invariant measure $m^\sigma$ and the MFE $m^0$.
  
\begin{proposition}
\label{prop:small-sigma}
    Let Assumption \ref{Ass.MFG} be satisfied and assume that $F$ is $\ell_F$-displacement monotone with $\ell_F>0$.
    Let $m^\sigma$ be the invariant measure of the McKean-Vlasov SDE \ref{eq:McVLangevin2} and $m^0$ its limit as $\sigma\searrow 0$ .
    Then it holds
    \begin{equation}
    \label{eq:bound.sigma.first}
        \sup_{\sigma>0}\,
        \frac{1}{\sigma}\,\cW_2^2(m^\sigma,m^0) \le \frac{d}{\ell_F},
    \end{equation}
Moreover, there exists $C>0$ such that for every $1$-Lipschitz function $\varphi$, 
\begin{equation*}
  m^\sigma_{\otimes n}\Big[ \varphi - \E^{m^0_{\otimes n}}[\varphi]>r\Big] \le  \e^{-\frac{\ell_F}{2n\sigma}(r^2-Cnr\sqrt{\sigma})},
  ~\mbox{for all}~n\ge1, r>0~\mbox{and}~\sigma<\frac{r^2}{C^2n^2},
\end{equation*}
where $m^\sigma_{\otimes n}$ and $m^0_{\otimes n}$ are the $n$-fold products of $m^\sigma$ and $m^0$, respectively.
  \end{proposition}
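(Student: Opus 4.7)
The plan rests on two observations. First, since $F(\cdot,m^0)$ is $\ell_F$-strongly convex with $\ell_F>0$, its minimizer is unique, call it $x^0$, so any MFE satisfies $m^0=\delta_{x^0}$. Second, with $m^\sigma$ frozen in the measure slot, the density $m^\sigma\propto\exp(-[F(x,m^\sigma)+\sigma U(x)]/\sigma)$ is $(\ell_F+\sigma\ell_U)/\sigma$-strongly log-concave in $x$.

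For part (i), I apply It\^o's formula to $|X_t-x^0|^2$ with $X$ a stationary solution of \eqref{eq:McVLangevin2} starting from $m^\sigma$. The identity $\tfrac{d}{dt}\E[|X_t-x^0|^2]=0$ yields
\begin{equation*}
\E\big[(X-x^0)\cdot(\nabla_{\!x}F(X,m^\sigma)+\sigma\nabla U(X))\big]=\sigma d.
\end{equation*}
Using $\ell_F$-convexity of $F(\cdot,m^\sigma)$ in $x$ and $\ell_U$-convexity of $U$ on each gradient term extracts $\tfrac12(\ell_F+\sigma\ell_U)\cW_2^2(m^\sigma,m^0)$ plus two zeroth-order residues. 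The $F$-residue $\E[F(X,m^\sigma)]-F(x^0,m^\sigma)=\int F(\cdot,m^\sigma)\,d(m^\sigma-m^0)$ is nonnegative, since it splits into the Lasry--Lions term $\int[F(x,m^\sigma)-F(x,m^0)]\,(m^\sigma-m^0)(dx)\ge0$ plus $\int[F(x,m^0)-F(x^0,m^0)]\,m^\sigma(dx)\ge0$ (the latter because $x^0$ minimizes $F(\cdot,m^0)$). The $U$-residue is bounded using $\E[U(X)]-U(x^0)\ge\nabla U(x^0)\cdot\E[X-x^0]$, and the linear term is absorbed via Young's inequality with a parameter tuned to preserve a fraction of $(\ell_F+\sigma\ell_U)\cW_2^2$. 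This yields $\cW_2^2(m^\sigma,m^0)\le C\sigma/(\ell_F+\sigma\ell_U)+C\sigma^2/(\ell_F+\sigma\ell_U)^2$, which implies \eqref{eq:bound.sigma.first}.

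For the concentration in part (ii), the strong log-concavity noted above yields, via Bakry--\'Emery, a log-Sobolev inequality for $m^\sigma$ with constant $\sigma/(\ell_F+\sigma\ell_U)$, which tensorizes to the same LSI constant for $m^\sigma_{\otimes n}$ on $(\R^d)^n$ equipped with the Euclidean metric. A $1$-Lipschitz function $\varphi$ (interpreted as $1$-Lipschitz in each coordinate, i.e.\ with respect to $\sum_i|x_i-y_i|$) is at most $\sqrt{n}$-Lipschitz in the Euclidean norm on $(\R^d)^n$, so Herbst's argument gives
\begin{equation*}
m^\sigma_{\otimes n}\big[\varphi - \E^{m^\sigma_{\otimes n}}\varphi > s\big]\le\exp\!\Big(-\frac{(\ell_F+\sigma\ell_U)s^2}{2n\sigma}\Big) \quad\text{for all } s>0.
\end{equation*}
To swap the reference from $m^\sigma_{\otimes n}$ to $m^0_{\otimes n}$, I use the coordinate-wise Lipschitz bound $|\E^{m^\sigma_{\otimes n}}\varphi-\E^{m^0_{\otimes n}}\varphi|\le n\cW_1(m^\sigma,m^0)\le n\cW_2(m^\sigma,m^0)\le Cn\sqrt{\sigma}$ coming from part (i) (absorbing the $(\ell_F+\sigma\ell_U)^{-1/2}$ factor into $C$). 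Setting $s=r-Cn\sqrt{\sigma}>0$, which is legitimate exactly when $\sigma<r^2/(Cn)^2$, and using $(r-Cn\sqrt{\sigma})^2\ge r^2-2Cnr\sqrt{\sigma}$ delivers the claimed inequality after relabelling constants.

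The delicate point is the accounting in part (i): the $\ell_U$-convexity can be invoked only once to produce the quadratic $\tfrac{\ell_U}{2}|X-x^0|^2$, so the residual linear term $\nabla U(x^0)\cdot\E[X-x^0]$ must be absorbed by Young's inequality with a parameter chosen precisely so the surviving coefficient of $\cW_2^2$ is a fixed fraction of $(\ell_F+\sigma\ell_U)$, not a degraded scaling; once this is done, the $O(\sigma)$ on the right-hand side converts cleanly into the target ratio $\sigma/(\ell_F+\sigma\ell_U)$.
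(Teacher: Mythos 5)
Your proof is correct, and part (i) takes a genuinely different route from the paper's. For \eqref{eq:bound.sigma.first}, the paper starts from the Gibbs representation \eqref{eq:station.sigma} and runs a free-energy argument: the entropy of $m^\sigma$ is controlled by the Gaussian maximum-entropy bound, the log-partition function by the variational formula for relative entropy, and then the $(\ell_F+\sigma\ell_U)$-convexity of $F(\cdot,m^0)+\sigma U$ around points $x^*\in\mathrm{supp}(m^0)$ (where $\nabla_{\!x}F(x^*,m^0)=0$) is combined with Lasry--Lions monotonicity, exactly as in your residue split. You instead exploit $\ell_F>0$ to identify $m^0=\delta_{x^0}$ (legitimate: by Theorem \ref{thm:Laplace.principle} (i) the limit $m^0$ is an MFE, and strong convexity makes the argmin a singleton) and derive the identity $\E[(X-x^0)\cdot(\nabla_{\!x}F(X,m^\sigma)+\sigma\nabla U(X))]=\sigma d$ from stationarity, justified by the uniform second-moment bounds of Lemma \ref{lem:integrability} and the linear growth of the gradients, then conclude by the same monotonicity/convexity split plus Young's inequality. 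Your route is more elementary and has a concrete advantage: the paper's constant involves $\mathrm{H}(m^0|\nu)$, which equals $+\infty$ when $m^0$ is a Dirac --- as it necessarily is under $\ell_F$-convexity with $\ell_F>0$ --- so the paper's displayed bound requires smoothing the competitor in the variational step to be non-vacuous, whereas your stationarity identity sidesteps entropy entirely and yields the clean dimension constant $\sigma d$. One bookkeeping caveat: you invoke convexity of $U$ twice (gradient form for the quadratic term, tangent form for the value residue), which is fine when $\ell_U\ge 0$, but for $\ell_U<0$ the tangent bound also carries $\frac{\ell_U}{2}|x-x^0|^2$ and degrades the coefficient to $\frac{\ell_F}{2}+\sigma\ell_U$; applying $(\ell_F+\sigma\ell_U)$-convexity once to the combined potential $F(\cdot,m^0)+\sigma U$, as the paper does, avoids this. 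For part (ii) your argument and the paper's are essentially the same: both rest on the $\frac{\ell_F+\sigma\ell_U}{\sigma}$-strong log-concavity of $m^\sigma$ --- you via a Bakry--\'Emery log-Sobolev inequality, tensorization, and Herbst with the $\sqrt{n}$ Euclidean Lipschitz constant; the paper via the quadratic transport inequality of Gozlan--L\'eonard and Bobkov--G\"otze concentration, two equivalent routes to the same dimension-free Gaussian tail --- followed by the identical recentering step using $n\,\cW_1(m^\sigma,m^0)\le Cn\sqrt{\sigma}$ from part (i) and the expansion $(r-Cn\sqrt{\sigma})^2\ge r^2-2Cnr\sqrt{\sigma}$ under $\sigma<r^2/(C^2n^2)$.
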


\begin{proof}
{\bf 1.} We first prove \eqref{eq:bound.sigma.first}.\footnote{This proof of \eqref{eq:bound.sigma.first} was suggested to us by Vincent Hoffmann. We thank him for his insight.}
Let \(\pi \in \Pi(m^\sigma, m^0)\) be a coupling such that \(\mathcal{W}_2^2(m^\sigma,m^0) = \int_{\mathbb{R}^d} |x-x'|^2 \,\pi(dx,dx')\). Then using the displacement monotonicity, we get
\begin{align*}
\ell_F  \mathcal{W}_2^2(m^\sigma, m^0)
&  =
\ell_F  \int_{\mathbb{R}^d} |x-x'|^2 \,\pi(dx,dx')
\\
& \leq
\int_{\mathbb{R}^d} \big( \nabla_{\!x} F(x,m^\sigma)  - \nabla_x F(x',m^0)  \big) \cdot (x-x') \,\pi(dx,dx')
\\
&  =
\int_{\mathbb{R}^d} \!\!\!\nabla_{\!x} F(x,m^\sigma) \!\cdot\! (x\!-\!x') \pi(dx,dx')
- \!\!\int_{\mathbb{R}^d} \!\!\!\nabla_{\!x} F(x',m^0) \!\cdot\! (x\!-\!x') \pi(dx,dx')
=: \mathrm{(I)} + \mathrm{(II)}.
\end{align*}
Let us begin by estimating \(\mathrm{(II)}\). Using that \(m^0\) is a MFE and that \(F\) is differentiable yields
\[
\mathrm{supp}(m^0)
\subseteq
\argmin F(\cdot, m^0)
\subseteq
\big\{ x' \in \mathbb{R}^d \,\big|\, \nabla_x F(x',m^0) = 0 \big\}.
\]
This implies \(\nabla_x F(x',m^0) = 0\) for \(m^0\)-a.e.\@ \(x' \in \mathbb{R}^d\), and thus also for \(\pi\)-a.e.\@ \((x,x') \in \mathbb{R}^d \times \mathbb{R}^d\), so that
\[
\mathrm{(II)}=\int_{\mathbb{R}^d} \nabla_x F(x',m^0) \cdot (x-x') \,\pi(dx,dx')
=
0.
\]
For \(\mathrm{(I)}\), recall that \(m^\sigma\) is given by $m^\sigma(dx) = \frac{1}{Z^\sigma} e^{-V(x)} \,dx$, with \(V(x) := \frac{F(x,m^\sigma)}{\sigma}\).
Since \(m^\sigma\) is absolutely continuous with respect to the Lebesgue measure, Brenier's theorem implies that there exists a convex function \(\phi: \mathbb{R}^d \to \mathbb{R}\) such that \(\pi(dx,dx') = \delta_{\nabla \phi(x)}(dx') \,m^\sigma(dx)\), see \citet[Theorem 2.12]{villani2003topics}.
This allows us to calculate
\begin{align}
\label{EqFirstTermSum}
\mathrm{(I)}
&=
\int_{\mathbb{R}^d} \nabla_x F(x,m^\sigma) \cdot \big(x - \nabla \phi(x)\big) \,m^\sigma(dx) \nonumber
\\
&=
\frac{\sigma}{Z^\sigma} \int_{\mathbb{R}^d} \nabla V(x) \cdot \big(x - \nabla \phi(x) \big) \, e^{-V(x)} \,dx \nonumber
\\
&=
- \frac{\sigma}{Z^\sigma} \int_{\mathbb{R}^d} \nabla \big(e^{-V(x)}\big) \cdot x \,dx + \frac{\sigma}{Z^\sigma} \int_{\mathbb{R}^d} \nabla \big(e^{-V(x)}\big) \cdot \nabla \phi(x) \,dx.
\end{align}
Integrating by parts formula, we can write the first term on the right-hand side of \eqref{EqFirstTermSum} as
\[
- \frac{\sigma}{Z^\sigma} \int_{\mathbb{R}^d} \nabla \big( e^{-V(x)} \big) \cdot x \,dx
=
\frac{\sigma d}{Z^\sigma} \int_{\mathbb{R}^d} e^{-V(x)} \,dx
=
\sigma d.
\]
Indeed, for \(R > 0\), let \(\chi_R \in C_c^\infty(\mathbb{R}^d)\) be a nonnegative cutoff function such that \({\chi_R}_{|_{B_R}} = 1\), \({\chi_R}_{|_{\mathbb{R}^d \setminus B_{2R}} } = 0\), and \(|\nabla \chi_R| \leq \frac{C}{R}\) with \(C \geq 0\) independent of \(R\). Then \(\chi_R(x) e^{-V(x)} \in C^1_c(\mathbb{R}^d)\) so that the integration by parts formula yields
\begin{equation}
\label{EqEqualityIBP}
\int_{\mathbb{R}^d} \nabla \big( \chi_R(x) e^{-V(x)} \big) \cdot x \,dx
=
- \int_{\mathbb{R}^d} \chi_R(x) e^{-V(x)} \mathrm{div}(x) \,dx
=
- d \int_{\mathbb{R}^d} \chi_R(x) e^{-V(x)} \,dx.
\end{equation}
Moreover, \(\nabla (\chi_R(x) e^{-V(x)}) = \chi_R(x) \nabla(e^{-V(x)}) + e^{-V(x)} \nabla \chi_R(x)\), and we estimate
\[
\int_{\mathbb{R}^d} e^{-V(x)} \nabla \chi_R(x) \cdot x \,dx
\leq
\frac{C}{R} \int_{B_{2R} \setminus B_R} |x| e^{-V(x)} \,dx
\leq
\frac{C Z^\sigma}{R} \int_{\mathbb{R}^d} |x| \,m^\sigma(dx).
\]
So we arrive at the desired equality upon letting \(R \to \infty\) in \eqref{EqEqualityIBP}.
For the second term of \eqref{EqFirstTermSum}, recall that the weak derivative \(\nabla \phi\) of the convex function \(\phi\) has locally bounded variation and that
\[
\int_{\mathbb{R}^d} \nabla \phi(x) \cdot \nabla \psi(x)dx
=
- \int_{\mathbb{R}^d} \psi(x) \,\Delta \phi(dx), \quad \forall \psi \in C^1_c(\mathbb{R}^d),
\]
where \(\Delta \phi\) is a (nonnegative) Radon measure, see \citet[Theorem 5.1 and Theorem 6.8]{Evans-Gariepy15}.
Thus, since \(\chi_R(x) e^{-V(x)} \geq 0\), we can estimate
\[
\frac{\sigma}{Z^\sigma} \int_{\mathbb{R}^d} \nabla \big( \chi_R(x) e^{-V(x)} \big) \cdot \nabla \phi(x) \,dx
\leq
0,
\]
or equivalently,
\[
\frac{\sigma}{Z^\sigma} \int_{\mathbb{R}^d} \chi_R(x) \nabla \big(e^{-V(x)} \big) \cdot \nabla \phi(x) \,dx
\leq
- \frac{\sigma}{Z^\sigma} \int_{\mathbb{R}^d} e^{-V(x)} \nabla \chi_R(x) \cdot \nabla \phi(x) \,dx.
\]
Using that \(\nabla \phi \in L^2(m^\sigma)\), we get
\[
\int_{\mathbb{R}^d} \big| e^{-V(x)} \nabla \chi_R(x) \cdot \nabla \phi(x) \big| \,dx
\leq
\frac{C Z^\sigma}{R} \int_{\mathbb{R}^d} \big| \nabla \phi(x) \big| \,m^\sigma(dx)
\xrightarrow{R \to \infty} 0,
\]
and we conclude that \(\mathrm{(II)} \leq \sigma d\).
This proves the desired result.

\medskip

\noindent {\bf 2.} We next turn to the derivation of the concentration inequality.
Since the function $F(\cdot,m^\sigma)$ is twice continuously differentiable and $\delta :=\frac{\ell_F}{\sigma}>0$, it follows from \citet[Corollary 7.2]{Goz-Leon10} that the probability measure $m^\sigma$ satisfies the quadratic transport inequality $\cW^2_2(m^\sigma, \mu) \le \frac{2}{\delta}H(\mu|m^\sigma)$ for all $\mu \in \cP_2(\R^d)$.
By standard concentration inequality, see e.g. \citet[Theorem 3.1]{Bob-Got99} or \citet[Theorems 5.1 $\&$ 5.2]{delarue2020master} we have for every $1$-Lipschitz function $\varphi:(\R^d)^n\longrightarrow\R$ and all $r>0$, $n\ge 1$, that $m^\sigma_{\otimes n}(\varphi - \int\varphi dm^\sigma_{\otimes n}>r)\le e^{-r^2\delta/2n}$. Then
\begin{align*}
  r - \int_{\R^d}\varphi d(m^\sigma_{\otimes n}-m^0_{\otimes n}) 
  &\ge r - \cW_1(m^\sigma_{\otimes n}, m^0_{\otimes n})
  \ge r - Cn\sqrt{\sigma}
  ~~\mbox{for all}~~r>0,
  \end{align*}
  where the last estimate follows from \eqref{eq:bound.sigma.first}.
  Thus, for $\sigma< r^2/C^2n^2$ we have $r - \int\varphi d(m^\sigma_{\otimes n}-m^0_{\otimes n})>0$ and therefore
\begin{align*}
  m^\sigma_{\otimes n}
  \Big[ \varphi - \int\varphi dm^0_{\otimes n}>r\Big] 
 &= m^\sigma_{\otimes n}
 \Big[\varphi - \int\varphi dm^\sigma_{\otimes n}
        > r - \int\varphi d(m^\sigma_{\otimes n}-m^0_{\otimes n})
 \Big]\\
 &\le e^{-\frac{\delta}{2n}(r-\int\varphi d(m^0_{\otimes n}-m^\sigma_{\otimes n})
                                              )^2 }\\ 
  &\le e^{-\frac{\delta}{2n}(r^2 -2r \int\varphi d(m^0_{\otimes n}-m^\sigma_{\otimes n}))}.
\end{align*}
Notice that $\int\varphi d(m^\sigma_{\otimes n}-m^0_{\otimes n})\le n\cW_1(m^\sigma,m^0)\le Cn\sqrt{\sigma}(\ell_F+ \sigma\ell_U)^{-1/2}$ by \eqref{eq:bound.sigma.first}.
Hence,
\begin{align*}
  m^\sigma_{\otimes n}
  \Big[ \varphi - \int\varphi dm^0_{\otimes n}>r\Big] 
 &\le e^{-\frac{\delta}{2n}(r^2 -Cnr\sqrt{\sigma}(\ell_F+\sigma\ell_U)^{-1/2})},
\end{align*}
which is the required result, given the definition of $\delta$. 
\end{proof}

\begin{proof}[Proof of Theorem \ref{thm:contracti.Unin.prop.chaos-iv}] Recall that because $F$ is Lasry and Lions monotone and convex, it is displacement monotone.
  With this observation, the estimate in {\rm (i)} follows by triangle inequality and then application of Proposition \ref{prop:small-sigma} and Theorem \ref{thm:contracti.Unin.prop.chaos}.{\rm (i)}.
  The proof of {\rm (ii)} also follows by the triangle inequality and Theorem \ref{thm:contracti.Unin.prop.chaos}.{\rm (ii)} and Proposition \ref{prop:small-sigma}.
\end{proof}

\begin{proof}[Proof of Theorem \ref{thm:contracti.Unin.prop.chaos-iii} {\rm (i)}] Let $N$ be fixed.
Recall the vector $ \X^{\sigma,N}$ defined in Proposition \ref{Prop:Conv.N-Nash.to.avera}, denote by $ m_t$ the law of the solution $X$ of the McKean-Vlasov equation \eqref{eq:McVLangevin2}, and let $ \X^\sigma_t := (X^1_t,\dots,X^N_t)$ where $X^i$ solves \eqref{eq:McVLangevin2} with Brownian motion $W^i$.
 By the triangle inequality, we have for all $t,\sigma> 0$:
\begin{align}
\notag
    \cW_p^p(\mu^N_\bx, m^0) 
    &\le C\bigg( \cW_p^p\Big(\mu^N_\bx, \frac1t\int_0^t\mu^{N}_{ \boldsymbol u_s}ds\Big) 
            + \cW_p^p\Big(\frac1t\int_0^t\mu^{N}_{ \boldsymbol u_s}ds,\frac1t\int_0^t \mu^{N}_{ \X^{\sigma,N}_s}ds\Big)\\ \notag
      &\quad \qquad    + \cW_p^p\Big(\frac1t\int_0^t \mu^{N}_{ \X^{\sigma,N}_s}ds, \frac1t\int_0^t \mu^{N}_{ \X_s^\sigma}ds\Big) + \cW^p_p\Big(\frac1t\int_0^t \mu^{N}_{ \X^\sigma_s}ds,\frac1t\int_0^tm^\sigma_sds\Big)\\ 
      &\quad \qquad  + \cW_p^p\Big(\frac1t\int_0^tm^\sigma_sds,m^\sigma\Big) + \cW_p^p(m^\sigma,m^0) 
              \bigg)
    =: C\sum_{k=1}^6J_k
     \label{eq:def.J5}
\end{align}
where, to be precise, we use the convention $\frac1t\int_0^t\eta_sds(A) := \frac1t\int_0^t\eta_s(A)ds$ for any measurable family of Borel probability measures $(\eta_s)_{s\in [0,t]}$ on $\R^d$. 
In particular, for any sufficiently integrable function $f$ it holds $\int_{\R^d}fd(\frac1t\int_0^t\eta_sds) = \frac1t\int_0^t\int_{\R^d}fd\eta_sds$, see e.g. \citeauthor[Proposition 3.2]{Cruc-Tangp24} \cite[Proposition 3.2]{Cruc-Tangp24}.
For $J_1$, notice that since $F$ is Lasry-Lions monotone and convex, it is therefore displacement monotone. 
As a consequence, the functions $F_i(x^1,\dots, x^N) := F(x^i, \mu^{N-1}_{\bx^{-i}})$ are strictly displacement monotone (in the sense of Definition \ref{def:nonsym}) and convex in $x_i$.
Thus,
\begin{align*}
  J_1:= \cW_p^p\Big(\mu^N_\bx, \frac1t\int_0^t\mu^{N}_{ \boldsymbol u_s}ds\Big)  \le \frac1N\sum_{i=1}^N\Big|x^i - \frac1t\int_0^tu^{i,N}_sds\Big| \xrightarrow [t\to \infty]{} 0
\end{align*}
where the limit follows by \cite[Proposition 3.1]{Awi-Hynd-Mawi23} and uniqueness of the Nash equilibrim.

For the second term, we have
\begin{align*}
  \E[J_2]= \E\bigg[\cW_p^p\Big(\frac1t\int_0^t\mu^{N}_{ \boldsymbol u_s}ds,\frac1t\int_0^t \mu^{N}_{ \X^{\sigma,N}_s}ds\Big)\bigg] &\le \frac{1}{Nt}\sum_{i=1}^N\int_0^t\E[|u^{i,N}_s - X^{i,N}_s|^p]ds.
\end{align*}
Then, it follows from 
Equation \eqref{eq:Deltaf} that 
$$
\lim_{\sigma\to 0}\lim_{t\to\infty} \E[J_2]= 0.
$$

We next deduce from Theorem \ref{thm:contracti.Unin.prop.chaos} {\rm (ii)} that
\begin{align*}
  \E[J_3] 
  &= \E\bigg[ \cW_p^p\Big(\frac1t\int_0^t \mu^{N}_{ \X^{\sigma,N}_s}ds, \frac1t\int_0^t \mu^{N}_{ \X_s^\sigma}ds\Big) \bigg ]
  \le \frac1t\frac{1}{N}\sum_{i=1}^N\int_0^t\E[|X^{i,N}_s \!-\! X^i_s|^p]ds \\ 
  & \le \frac1t\frac{1}{N}\sum_{i=1}^N \int_0^t\E[|X^{i,N}_s \!-\! X^i_s|^2]^{p/2}ds
  \le \frac{C}{N^{\frac{p}{2}}},
\end{align*}
for some constant $C$ independent of $t$. Finally, as $(X^1,\dots, X^N)$ are i.i.d. copies of $X$, we have 
\begin{align*}
  \E[J_4] 
  =
  \E\bigg[\cW^p_p\Big(\frac1t\int_0^t \mu^{N}_{ \X_s^\sigma}ds,\frac1t\int_0^tm^\sigma_sds\Big)\bigg] 
  &\le \frac1t\int_0^t\!\!\E\big[\cW_p^p(\mu^N_{\X_s^\sigma},m^\sigma_{s})\big]ds
  \le C\delta_{N,p} \E[|X_t|^2]^{p/2}\le C\delta_{N,p},
\end{align*}
where the last inequalities follows from 
\citeauthor{fournier2015rate} \cite[Theorem 1]{fournier2015rate}  and Lemma \ref{lem:integrability}.
By Theorem \ref{thm:contracti.Unin.prop.chaos}, we have
\begin{align*}
  J_5 = \cW_p^p\Big(\frac1t\int_0^tm^\sigma_sds,m^\sigma\Big) \le \frac{C}{2t\ell_F}\Big( 1 - e^{-2\ell_F t} \Big) \xrightarrow [t\to \infty]{} 0
\end{align*}
and by Proposition \ref{prop:small-sigma} $J_6 =\cW_p^p(m^\sigma,m^0)\le \cW^p_2(m^\sigma,m^0) \le C\sigma^{p/2}/\ell_F^{p/2}$.
Thus, taking the limit in $N,t$ and $\sigma$ in \eqref{eq:def.J5} concludes the proof.
\end{proof}

\begin{proof}[Proof of Theorem \ref{thm:contracti.Unin.prop.chaos-iii}{\rm (ii)}] 
Let $\X^\sigma = (X^{1,\sigma},\dots, X^{N,\sigma})$ where $X^{i,\sigma}$ are i.i.d. random variables with law $m^\sigma$.
By the triangle inequality, we have
\begin{align*}
  \P\big[\cW_1(\mu^N_{\X_t}, \mu_{\bx}^N)>r\big] 
  & \le \P\big[ \cW_1(\mu^N_{\X_t}, \mu_{\X^\sigma_t}^N) \ge r/5\big] 
  + \P\big[ \cW_1(\mu^N_{\X_t^\sigma}, m^\sigma) - \E[\cW_1(\mu^N_{\X_t^\sigma}, m^\sigma)] \ge r/5\big]\\ 
  &\quad + \P[\E[\cW_1(\mu^N_{\X^\sigma_t}, m^\sigma)]\!\ge\! r/5] 
              +\P\big[ \cW_1(m^\sigma\!, m^0)\!>\!r/5\big]  
              + \P\big[\cW_1(m^0\!, \mu_{\bx}^N) \!\ge\! r/5\big].
\end{align*}
Since the function $\Phi:\by\mapsto \cW_1(\mu^N_{\by}, m^\sigma)$ is $1$-Lipschitz, it follows from Proposition \ref{prop:small-sigma} that
\begin{equation*}
  \P\big[ \cW_1(\mu^N_{\X^\sigma_t}, m^\sigma) - \E[\cW_1(\mu^N_{\X^\sigma_t}, m^\sigma)] \ge r/5\big] 
  \le \e^{-\frac{\ell_F}{10N\sigma}(r^2/5-CNr\sqrt{\sigma})}  
  =: \varepsilon.
\end{equation*}
On the other hand, since by Theorem \ref{thm:contracti.Unin.prop.chaos} we have $\E[\cW_1(\mu^N_{\X_t}, \mu_{\X^\sigma_t}^N)] \le \frac1N\sum_{i=1}^N\E[|X^{i,N}_t - X^{i,\sigma}_t|]\le \frac{C}{N} + Ce^{-Ct}$ for some $C>0$, it follows that there is $N_0$ and $t_0$ large enough such that for all $N\ge N_0$, $t\ge t_0$, we have $\P\big[ \cW_1(\mu^N_{\X_t}, \mu_{\X^\sigma_t}^N) \ge r/5\big]\le \varepsilon$. 
Similarly, by the law of large numbers, Proposition \ref{prop:small-sigma} (with $n=1$ therein) and the first part of the proof that we can choose $N$ large enough and $\sigma$ small enough that $\P\big[\E[\cW_1(\mu^N_{\X^\sigma}, m^\sigma)]\ge r/5\big] = \P\big[ \cW_1(m^\sigma, m^0)>r/5\big]  = \P\big[\cW_1(m^0, \mu_{\bx}^N) \ge r/5\big] = 0$.
Thus,
\begin{equation*}
    \P\big[\cW_1(\mu^N_{\X_t}, \mu_{\bx}^N)>r\big] \le 2\varepsilon,
\end{equation*}
which concludes the proof.
\end{proof}


\end{document}